\documentclass[oneside, 11pt]{amsart}

\usepackage[letterpaper,body={15.6cm,24cm}, mag=1000]{geometry}
\usepackage{amssymb}
\usepackage{amsthm}
\usepackage{amscd}

\numberwithin{equation}{section}
\theoremstyle{plain}
\newtheorem{cor}[equation]{Corollary}
\newtheorem{lemma}[equation]{Lemma}

\newtheorem{prop}[equation]{Proposition}

\newtheorem{thm}[equation]{Theorem}

\newtheorem*{thma}{Theorem A}
\newtheorem*{thmb}{Theorem B}

\theoremstyle{definition}

\newtheorem{remark}[equation]{Remark}

\newcommand{\dlabel}[1]{\ifmmode \text{\ttfamily \upshape [#1] } \else
{\ttfamily \upshape [#1] }\fi \label{#1}}

\newcommand{\Z}{\operatorname{Z} }

\renewcommand{\Pr}{\operatorname{Pr} }

\newcommand{\gen}[1]{\left < #1 \right >}
\newcommand{\Aut}{\operatorname{Aut} }

\begin{document}

\baselineskip 15pt

\title[A  given element is a commutator of two group elements]{Probability that a  given element of a group is a commutator of any  two randomly chosen group elements}

\author{Rajat K.~Nath and Manoj K.~Yadav}

\address{Deparment of Mathematical Sciences, Tezpur University, Tezpur\\ 
Sonitpur, Assam-784 028, INDIA}

\address{School of Mathematics, Harish-Chandra Research Institute \\
Chhatnag Road, Jhunsi, Allahabad - 211 019, INDIA}
\email{rajatkantinath@yahoo.com and myadav@hri.res.in}

\subjclass[2000]{Primary 20D60; Secondry 20P05}
\keywords{Camina group, conjugacy class size, isoclinism of groups}

\begin{abstract}
We study the probability of  a given element, in the commutator subgroup of a group, to be equal to a commutator of two randomly chosen group elements, and compute explicit formulas for calculating this probability for some interesting classes of groups having only two different conjugacy class sizes. We re-prove the fact that if $G$ is a finite group such that the set of its conjugacy class sizes is $\{1, p\}$, where $p$ is a prime integer, then $G$ is isoclinic (in the sense of P. Hall) to an extraspecial $p$-group.
\end{abstract}

\maketitle

\section{Introduction}

For a finite group $G$, let ${\Pr}(G)$ denote the commutativity degree or commuting probability of $G$, which is defined by
\[{\Pr}(G) = |\{(x, y) \in G \times G \mid xy = yx\}|/|G|^2,\] where, for any finite set $S$, $|S|$ denotes its cardinality. Various aspects of this notion have been studied by many mathematicians over the years. A very impressive historical account can be found in \cite[Introduction]{pH12}. Recently,  Pournaki and Sobhani \cite{PS08} introduced the notion of ${\Pr}_g(G)$, which is defined  by
\[{\Pr}_g(G) =  |\{(x, y) \in G \times G \mid x^{-1}y^{-1} xy = g\}|/{|G|^2}.\]
Notice that for a given element $g \in G$, ${\Pr}_g(G)$ measures the probability that the commutator of two randomly chosen group elements is equal to $g$. Obviously, when $g = 1$, ${\Pr}_g(G) = {\Pr}(G)$.  Pournaki and Sobhani \cite{PS08} mainly studied ${\Pr}_g(G)$ for finite groups $G$ which have only two different irreducible complex character degrees and obtained an impressive formula for ${\Pr}_g(G)$ for such groups $G$. In particular, using character theoretic techniques, they obtained explicit formulas for ${\Pr}_g(G)$, when $G$ is a finite group with  $|\gamma_2(G)| = p$, where $p$ is a prime integer and $\gamma_2(G)$ denotes the commutator subgroup of $G$.  In this situation, there  can only be two cases, namely, (i) $\gamma_2(G) \le \Z(G)$ or (ii) $\gamma_2(G) \cap \Z(G) = 1$, where $\Z(G)$ denotes the center of $G$. These are the cases which were studied by Rusin \cite{dR79} in 1979 and explicit formulas were ontained for ${\Pr}(G)$. 
For finite groups $G$ which have only two different irreducible complex character degrees $1$ and $m$ (say), they proved \cite[Theorem 2.2]{PS08} that for each $1 \neq g \in K(G)$, 
\begin{equation}\label{1eqn1}
\Pr_g(G) = (1/|\gamma_2(G)|)(1 - 1/m^2),
\end{equation}
where $K(G)$ denotes the set of all commutators in $G$. We assume that $1 \in K(G)$.

Motivated by the results of  Rusin \cite{dR79}, and Pournaki and Sobhani \cite{PS08}, we investigate ${\Pr}_g(G)$ for some classes of finite groups $G$ which have only two different conjugacy class sizes.  Explicit formulas are obtained for some interesting  classes of finite groups using purely group theoretic techniques.  A finite group $G$ is said to be a Camina group if $x^G = x\gamma_2(G)$ for all $x \in G - \gamma_2(G)$. It was proved by Dark and Scoppola \cite{DS96} that the nilpotency class of a finite Camina $p$-group is at most $3$. For such groups  $G$ of nilpotency  class $2$, we obtain a very nice formula for ${\Pr}_g(G)$ in the following theorem, proof of which follows from Theorem \ref{4thm1}. Explicit formulas for such groups of class $3$ are obtained in Section $5$.

\begin{thma} 
Let $G$ be a finite Camina $p$-group of nilpotency class $2$ such that $|\gamma_2(G)|  = p^r$, where $p$ is a prime number and $r$ is a positive integer. Then, for $g \in K(G)$, 
\[
{\Pr}_g(G) = \begin{cases} \frac{1}{p^r}\left(1 + \frac{p^r - 1}{p^{2m}}\right) \text{ if } g = 1\\
\frac{1}{p^r}\left(1 - \frac{1}{p^{2m}}\right) \text{ if } g \ne 1,
\end{cases}
\]
for some positive integer $m$ such that $r < m$. 
\end{thma}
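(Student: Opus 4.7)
The plan is to count, for each $g \in K(G) \subseteq \gamma_2(G)$, the size of the fibre
\[
N_g := |\{(x,y) \in G \times G : [x,y] = g\}|
\]
by fixing $x$ and summing over $y$. The payoff is that nilpotency class $2$ makes the map $\phi_x : G \to \gamma_2(G)$, $y \mapsto [x,y]$, a group homomorphism (since $[x,yz] = [x,z][x,y]^{z} = [x,z][x,y]$ as $[x,y] \in \gamma_2(G) \le Z(G)$). Hence, for every $x$, the fibre $\phi_x^{-1}(g)$ is either empty or a coset of $\ker \phi_x = C_G(x)$.

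First I would record the structural facts forced by the hypotheses. Because $G$ has class $2$ we have $\gamma_2(G) \le Z(G)$; conversely, the Camina condition $x^G = x\gamma_2(G)$ for $x \notin \gamma_2(G)$ shows that every $x \notin \gamma_2(G)$ is non-central, so $Z(G) \subseteq \gamma_2(G)$. Thus $Z(G) = \gamma_2(G)$, and for every non-central $x$ one has $[G:C_G(x)] = |x^G| = |\gamma_2(G)| = p^r$. By a standard result for Camina $p$-groups of class $2$, $|G/\gamma_2(G)| = p^{2m}$ for a positive integer $m$ with $r < m$; I would simply cite this, since it is part of the known structure and not the point of the theorem. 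So $|G| = p^{2m+r}$.

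Next, the counting. If $x \in Z(G)$, then $\phi_x$ is identically $1$, contributing $|G|$ to $N_1$ and $0$ to $N_g$ for $g \ne 1$. If $x \notin Z(G)$, then $|\ker \phi_x| = |C_G(x)| = |G|/p^r$, so $|\mathrm{Im}\,\phi_x| = p^r = |\gamma_2(G)|$, i.e.\ $\phi_x$ is surjective onto $\gamma_2(G)$; hence for every $g \in \gamma_2(G)$, $|\phi_x^{-1}(g)| = |G|/p^r$. Summing:
\[
N_1 = |Z(G)|\cdot|G| + (|G|-|Z(G)|)\cdot \frac{|G|}{p^r}, \qquad N_g = (|G|-|Z(G)|)\cdot \frac{|G|}{p^r} \ \text{ for } g\ne 1.
\]
Substituting $|Z(G)| = p^r$ and $|G| = p^{2m+r}$ and dividing by $|G|^2$ yields the two claimed formulas after a trivial simplification.

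The argument has no real obstacle; the only thing to verify carefully is that $\phi_x$ is a homomorphism (done above) and that $\phi_x$ is surjective for non-central $x$, which is where the Camina hypothesis enters in the guise $[G:C_G(x)] = p^r = |\gamma_2(G)|$. The presence of $p^{2m}$ in the final formula is just a repackaging of $|G| = p^r \cdot |G/Z(G)| = p^r \cdot p^{2m}$; the inequality $r < m$ is a background structural fact and plays no active role in the calculation.
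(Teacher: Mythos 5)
Your argument is correct and is essentially the paper's: counting, for each fixed $x$, the solution set of $[x,y]=g$ as a coset of $C_G(x)$ is exactly the content of Lemma \ref{4lemma1}, and the Camina hypothesis enters in both treatments through $|x^G|=|\gamma_2(G)|=p^r$ for every non-central $x$ (equivalently, the surjectivity of your $\phi_x$ onto $\gamma_2(G)$). The only cosmetic differences are that you work with $G$ directly rather than passing through the isoclinism reduction of Theorem \ref{4thm1}, and that, like the paper, you outsource the fact $|G:\Z(G)|=p^{2m}$ (and the bound relating $r$ and $m$) to Macdonald's structure theorem.
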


Denote by $P(G)$ the set $\{\Pr_g(G)  \mid 1 \neq g \in K(G)\}$. It follows from \cite[Theorem 2.2]{PS08} (see \eqref{1eqn1}) that $P(G)$ is a singleton set for all  finite groups $G$ which have only two different irreducible complex character degrees.  It follows from Theorem A that  $P(G)$ is a singleton set for all  finite Camina $p$-groups $G$ of nilpotency class $2$. Notice that  finite Camina $p$-groups of class $2$ forms a subclass of finite $p$-groups having only two different conjugacy class sizes. Are there also other classes of finite $p$-groups $G$ of class $2$ having only two different conjugacy class sizes and $|P(G)|=1$?  The answer is affirmative as we show in the following theorem, which we prove in Section 4.

\begin{thmb}
For any positive integer $r \ge 1$, there exists a group $G$ of order $p^{(r+1)(r+2)/2}$ such that $|P(G)| = 1$. Moreover, $\Pr_g(G) = (p^2 -1)/p^{2r+1}$ for each $1 \neq g \in K(G)$, and if $r >1$, $G$ is not a Camina group.
\end{thmb}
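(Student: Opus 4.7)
The natural candidate is the largest class-$2$, exponent-$p$ group on $r+1$ generators (for odd $p$). Concretely, set $V = \mathbb{F}_p^{r+1}$, $W = \bigwedge^2 V$, and let $G$ be the central extension $1 \to W \to G \to V \to 1$ whose commutator map descends to the universal alternating form $\omega(u,v) = u \wedge v$. For odd $p$ one may realize $G$ on the underlying set $V \oplus W$ with multiplication $(v,w)(v',w') = (v+v',\, w+w' + \tfrac{1}{2}\, v\wedge v')$; an analogous cocycle handles $p = 2$. The first task is to verify the structural facts: $|G| = p^{r+1}\cdot p^{\binom{r+1}{2}} = p^{(r+1)(r+2)/2}$, $G$ has nilpotency class $2$ with $\Z(G) = \gamma_2(G) = W$, and the commutator on $G$ is given by $\omega$ read on the images in $V$.

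\textbf{Non-Camina part.} For any $0 \ne u \in V$ the linear map $u \wedge \cdot : V \to W$ has kernel $\gen{u}$ and image $u \wedge V$ of dimension $r$. When $r > 1$ this is a proper subspace of $W$ (which has dimension $\binom{r+1}{2} > r$), so $[x,G] \subsetneq \gamma_2(G)$ for every $x \notin \Z(G)$, and the Camina condition $x^G = x\gamma_2(G)$ fails. Equivalently, $K(G)$ is $\{1\}$ together with the nonzero pure $2$-forms in $W$, which is a proper subset of $\gamma_2(G)$ once $r \ge 2$.

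\textbf{Commutator count.} Fix $1 \ne g \in K(G)$; then $g = u_0 \wedge v_0$ is a nonzero pure $2$-form and determines a unique $2$-plane $P = \gen{u_0, v_0} \le V$. Because the commutator on $G$ factors through $V$ and the fibres of $G \onto V$ all have size $|\Z(G)| = p^{\binom{r+1}{2}}$,
\[
|\{(x,y) \in G \times G : [x,y] = g\}| = |\Z(G)|^2 \cdot |\{(u,v) \in V \times V : u \wedge v = g\}|.
\]
The central claim is that any solution $(u,v)$ lies in $P \times P$: if $u \notin P$ then $g \notin u \wedge V$ by the rank computation above, so no $v$ exists. Choosing a basis $\{e_1, e_2\}$ of $P$ with $e_1 \wedge e_2 = g$, the equation $u \wedge v = g$ becomes $u_1 v_2 - u_2 v_1 = 1$, whose solution set is $\operatorname{SL}_2(\mathbb{F}_p)$, of size $p^3 - p$. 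Crucially this count is independent of $g$, which immediately gives $|P(G)| = 1$ and
\[
\Pr_g(G) = \frac{(p^3 - p) \cdot p^{r(r+1)}}{p^{(r+1)(r+2)}} = \frac{p^2 - 1}{p^{2r+1}}.
\]

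\textbf{Main obstacle.} The whole calculation pivots on the identity $|\{(u,v) \in V \times V : u \wedge v = g\}| = p^3 - p$ for every nonzero pure $g \in W$. The non-formal ingredient is the confinement of solutions to the $2$-plane determined by $g$; this simultaneously identifies $K(G)$ with the nonzero pure $2$-forms and forces the count, and hence $\Pr_g(G)$, to be uniform over $g \in K(G)\setminus\{1\}$. The remaining checks---order, class $2$, identification of $\Z(G) = \gamma_2(G) = W$, and the failure of the Camina condition for $r > 1$---are mechanical.
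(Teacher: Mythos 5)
Your proposal is correct, and it reaches the same numbers by a genuinely different route. The group you build --- the relatively free class-$2$ exponent-$p$ group on $r+1$ generators, realized as $V\oplus\bigwedge^2 V$ with commutator $u\wedge v$ --- is exactly the Ito group the paper uses (its presentation \eqref{6eqn1}), but where the paper works with the explicit generators $x_1,\dots,x_{r+1}$, you work with the exterior square. The paper's proof needs two technical lemmas: Lemma \ref{6lemma1}, a case-by-case argument showing that any non-commuting pair $g_1,g_2$ can be completed to a minimal generating set (so that $g=[g_1,g_2]$ can be treated as one of the standard $y_{ij}$), and Lemma \ref{6lemma2}, a generator-level computation showing $|\{x : y_{ij}\in[x,G]\}|=(p^2-1)p^{r(r+1)/2}$. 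Your argument replaces both by the single standard fact that a nonzero decomposable bivector $g=u_0\wedge v_0$ determines its $2$-plane $P=\gen{u_0,v_0}$ (if $u\wedge v=g\ne 0$ then $u\wedge g=0$, forcing $u\in P$), after which the solution set of $u\wedge v=g$ is identified with $\operatorname{SL}_2(\mathbb{F}_p)$ and the count $p^3-p$ is manifestly independent of $g$. This is cleaner and makes the uniformity of $\Pr_g(G)$ over $K(G)\setminus\{1\}$ transparent, whereas the paper's change-of-generating-set lemma is where an earlier version of the paper had a gap (as the acknowledgements note). Your non-Camina argument ($[x,G]=u\wedge V$ has dimension $r<\binom{r+1}{2}=\dim W$ for $r>1$) is the same observation as the paper's $b(G)=p^r<p^{r(r+1)/2}=|\gamma_2(G)|$. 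Two minor points: you should state up front that $p$ is odd (the paper's Lemma \ref{6lemma2} also assumes this, and your cocycle $\tfrac12 v\wedge v'$ requires it; the parenthetical about $p=2$ is not really substantiated and is best dropped or handled separately), and the phrase ``by the rank computation above'' deserves the one-line justification $u\wedge(u_0\wedge v_0)\ne 0$ for $u\notin P$, since the rank computation alone gives $\dim(u\wedge V)=r$ but not directly that $g\notin u\wedge V$.
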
 



Let $G$ be  an arbitrary finite group  having only two different conjugacy class sizes. Ito \cite{nI53} proved that such a group $G$ is  a prime power order group (upto abelian direct factors). It was then proved by  Ishikawa \cite{kI02} that the nilpotency class of such a group $G$ is bounded above by $3$. It follows from Theorem 2.3 (see below) that for studying $\Pr_g(G)$ for  arbitrary 
finite groups  having only two different conjugacy class sizes, it is sufficient to study it for such $p$-groups only.  Thus by Ishikawa's result, it is sufficient to consider such $p$-groups of nilpotency class $2$ and $3$ only.  In Section 5, we show that there exists a finite $p$-group $G$ of class $3$ having only two different conjugacy class sizes and  $|P(G)| > 1$. But, to the best of our knowledge, no example of a finite $p$-group $G$ of class $2$ having only two different conjugacy class sizes and $|P(G)| \neq 1$ is known. This gives rise to the following natural question.

\vspace{.1in}
\noindent{\bf Question.} Is it true that $|P(G)| = 1$ for all finite $p$-groups $G$ of class $2$ having only two different conjugacy class sizes?
\vspace{.1in}

In Section 5,  we also study some bounds on  $\Pr_g(G)$ for a finite group $G$. K. Ishikawa \cite{kI99} proved that  if $G$ is any finite group having only two different conjugacy class sizes $1$ and $p$, where $p$ is a prime integer, then $G$ is isoclinic (see Section 2 for the definition) to  an extraspecial $p$-group. This makes use of a  very deep result of Vaughan-Lee \cite[Main Theorem]{vL76}, which was  proved using Lie theoretic arguments.   We re-prove this statement using elementary arguments on commuting probability.

\vspace{.1in}

In the light of above discussion, it is interesting to pose the following natural problem.
\vspace{.1in}

\noindent{\bf Problem.} Classify all finite groups $G$ such that  $|P(G)| = 1$. 
\vspace{.1in}

 This problem can also be viewed in the following manner.  Let $\alpha : G \times G \to \gamma_2(G)$ be a commutator map defined by $\alpha(g_1, g_2) = [g_1, g_2]$ for each pair $(g_1, g_2) \in G \times G$. Let, for $g \in K(G)$, $n_g(G)$ denote the set $\alpha^{-1}(g)$. So $n_g(G)$, in some sense, can be viewed as a fiber on $g \in K(G)$ in $G \times G$. Now suppose that $N(G) := \{|n_g(G)| \mid 1 \neq g \in K(G)\}$.  Notice that $\Pr_g(G) = |n_g(G)|/|G|^2$. Thus classifying group $G$ with $|P(G)| = 1$ is nothing but classifying groups with $|N(G)| = 1$. Such things are defined and studied in \cite{SS09} for some other purposes.

\section{Preliminary results}

In 1940, P. Hall \cite{pH40} introduced the following concept of isoclinism on the class of all groups.

Let $X$ be a finite group and $\bar{X} = X/Z(X)$. 
Then commutation in $X$ gives a well defined map
$a_{X} : \bar{X} \times \bar{X} \to \gamma_2(X)$ such that
$a_{X}(xZ(X), yZ(X)) = [x,y]$ for $(x,y) \in X \times X$.
Two finite groups $G$ and $H$ are called \emph{isoclinic} if 
there exists an  isomorphism $\alpha$ of the factor group
$\bar G = G/Z(G)$ onto $\bar{H} = H/Z(H)$, and an isomorphism $\beta$ of
the subgroup $\gamma_2(G)$ onto  $\gamma_2(H)$
such that the following diagram is commutative
\begin{equation}\label{eqn1}
 \begin{CD}
   \bar G \times \bar G  @>a_G>> \gamma_2(G)\\
   @V{\alpha\times\alpha}VV        @VV{\beta}V\\
   \bar H \times \bar H @>a_H>> \gamma_2(H).
  \end{CD}
\end{equation}
The resulting pair $(\alpha, \beta)$ is called an \emph{isoclinism} of $G$ 
onto $H$. Notice that isoclinism is an equivalence relation among finite 
groups.

The following result is from \cite{pH40}.
\begin{thm}\label{2thm}
Let $G$ be any group. Then there exists a group $H$ such that $G$ and $H$ are isoclinic and $\Z(H) \le \gamma_2(H)$.
\end{thm}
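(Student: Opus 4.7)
The plan is to construct $H$ directly as a central extension realising the commutator structure of $G$. Setting $\bar G = G/Z(G)$ and $A = \gamma_2(G)$, the commutator in $G$ descends to a well-defined alternating map $c : \bar G \times \bar G \to A$, $(g_1 Z(G), g_2 Z(G)) \mapsto [g_1, g_2]$, which surjects onto $A$ and has trivial radical (by definition of $Z(G)$). I would take $H$ to be an extension $1 \to A \to H \to \bar G \to 1$ whose induced commutator map equals $c$, so that an element of $H$ projecting to $\bar g$ is central exactly when $\bar g$ lies in the radical of $c$, forcing the centre to coincide with $A$.

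In the nilpotency-class-two setting that is the principal setting of the paper, $\bar G$ is abelian and $c$ is bilinear, making the construction explicit: pick a bilinear $2$-cocycle $f : \bar G \times \bar G \to A$ whose antisymmetrisation equals $c$ (a routine choice after fixing a generating set of $\bar G$), and define $H = \bar G \times A$ as a set with product $(g_1, a_1)(g_2, a_2) = (g_1 g_2, a_1 + a_2 + f(g_1, g_2))$. A short computation gives $[(g_1, 0), (g_2, 0)] = (0, c(g_1, g_2))$, so $\gamma_2(H) = 0 \times A$ by surjectivity of $c$, and $(g, a) \in Z(H)$ iff $c(g, g') = 0$ for every $g'$, which forces $g = 0$ by triviality of the radical. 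Hence $Z(H) = 0 \times A = \gamma_2(H)$, giving $Z(H) \le \gamma_2(H)$, and the canonical isomorphisms $H/Z(H) \cong \bar G = G/Z(G)$ and $\gamma_2(H) \cong A = \gamma_2(G)$ fit into the square \eqref{eqn1}, realising the isoclinism of $G$ and $H$.

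The main obstacle is extending the construction to groups of higher nilpotency class, where $\bar G$ is non-abelian and $c$ is no longer bilinear. I would handle this inductively along the lower central series, applying the class-two argument to each successive quotient and lifting via compatible stem extensions. An elementary but only partially successful alternative is the subgroup-minimality approach---take $H \le G$ of smallest order satisfying $HZ(G) = G$, for which isoclinism is automatic because $Z(H) = H \cap Z(G)$ (any $h \in Z(H)$ centralises $HZ(G) = G$) and $\gamma_2(H) = \gamma_2(G)$ (rewrite $[h_1 z_1, h_2 z_2] = [h_1, h_2]$)---however, this route does not always deliver a stem group: for isolated examples such as $M_4(2) = \langle r, s \mid r^8 = s^2 = 1,\, srs = r^5 \rangle$, the group is its own minimal subgroup yet has $Z(M_4(2)) = \langle r^2 \rangle \not\le \langle r^4 \rangle = \gamma_2(M_4(2))$, and one must supplement with either a central quotient step (replacing $H$ by $H/N$ for a suitable $N \le Z(H)$ with $N \cap \gamma_2(H) = 1$, which preserves isoclinism) or the extension construction above.
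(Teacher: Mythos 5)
The first thing to say is that the paper contains no proof of this statement: Theorem \ref{2thm} is quoted verbatim from P.~Hall's 1940 paper, where the existence of a ``stem group'' in every isoclinism family is established by a single uniform construction valid for arbitrary groups, with no induction on the nilpotency class. So your attempt has to be judged on its own terms. Your class-two construction is essentially sound for the finite groups that occur in this paper, with two small imprecisions worth fixing. The commutator map $c$ does not in general surject onto $A=\gamma_2(G)$ (not every element of the derived subgroup is a commutator); its image merely generates $A$, which is all you need to conclude $\gamma_2(H)=0\times A$. And the existence of a bilinear $f$ with $f-f^{\mathrm{op}}=c$ is ``a routine choice after fixing a generating set'' only because a finite abelian group is a direct sum of cyclic groups, so that one can order a cyclic decomposition and set $f(e_i,e_j)=c(e_i,e_j)$ for $i<j$ and $0$ otherwise; for an arbitrary group, which is what the statement covers, $\bar G$ need not admit such a decomposition and this step is not automatic. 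Granting finiteness and class two, your verification that $\Z(H)=\gamma_2(H)=0\times A$ and that diagram \eqref{eqn1} commutes is correct, and your observations about the minimal supplement $H\le G$ with $H\Z(G)=G$ (including the $M_4(2)$ counterexample) are accurate.

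The genuine gap is everything beyond nilpotency class two. The statement is for arbitrary groups, it is invoked in the paper through Proposition \ref{3prop1} for nilpotent groups of unrestricted class, and $p$-groups of class $3$ genuinely occur here (Camina groups of class $3$, the group of \eqref{70eqn1}). ``Apply the class-two argument to each successive quotient and lift via compatible stem extensions'' is not an argument: the lifting is exactly where the content lies, since one must realise the full commutator map $\bar G\times\bar G\to\gamma_2(G)$, which is no longer bilinear, and there is no reason the graded pieces of the lower central series can be glued back into a single group with the prescribed commutator structure and small centre. Nor does your elementary fallback close the gap: for $M_4(2)$ the ``central quotient step'' you offer as a supplement also fails, because every nontrivial subgroup of $\Z(M_4(2))=\langle r^2\rangle\cong C_4$ contains $\gamma_2(M_4(2))=\langle r^4\rangle$, so no admissible $N$ exists; that example is rescued only by the cocycle construction, which is confined to class two. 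To make the proof complete you would either have to reproduce Hall's construction (which splits the superfluous part of the centre off in one step, using that every abelian group embeds in a divisible one) or restrict the statement to the class-two setting and track carefully where the paper uses more.
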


Pournaki and  Sobhani \cite{PS08} proved the following interesting result. Since the result is very important for our study, we provide a brief proof here.

\begin{thm}\label{2thm1}
Let $G$ and $H$ be two isoclinic finite groups with isoclinism  $(\alpha, \beta)$. Then  ${\Pr}_g(G) = \Pr_{\beta(g)}(H)$.
\end{thm}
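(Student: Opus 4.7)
The plan is to exploit the fact that the commutator map $a_G \colon \bar G \times \bar G \to \gamma_2(G)$ is well-defined on cosets modulo the center, so that both sides of the claimed equality can be rewritten in terms of fibers of $a_G$ and $a_H$ respectively. Once this is done, the isoclinism data $(\alpha,\beta)$ supplies a bijection between the two fiber sets, and the normalization by $|G|^2$ versus $|H|^2$ matches up because $\alpha$ is an isomorphism $\bar G \to \bar H$.

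First I would note that, for any $g \in G$, the commutator $[x,y]$ depends only on the cosets $xZ(G)$ and $yZ(G)$. Hence
\[
|\{(x,y) \in G \times G : [x,y] = g\}| = |Z(G)|^2 \cdot |a_G^{-1}(g)|,
\]
so that $\Pr_g(G) = |a_G^{-1}(g)| / |\bar G|^2$. The same computation in $H$ gives $\Pr_{\beta(g)}(H) = |a_H^{-1}(\beta(g))| / |\bar H|^2$.

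Next I would use the commutativity of the isoclinism diagram: $\beta \circ a_G = a_H \circ (\alpha \times \alpha)$. Since $\alpha \times \alpha$ is a bijection from $\bar G \times \bar G$ onto $\bar H \times \bar H$ and $\beta$ is a bijection from $\gamma_2(G)$ onto $\gamma_2(H)$, a pair $(\bar x, \bar y)$ lies in $a_G^{-1}(g)$ if and only if $(\alpha(\bar x), \alpha(\bar y))$ lies in $a_H^{-1}(\beta(g))$. Therefore $|a_G^{-1}(g)| = |a_H^{-1}(\beta(g))|$. Combined with $|\bar G| = |\bar H|$ (again from $\alpha$), the two expressions for $\Pr_g(G)$ and $\Pr_{\beta(g)}(H)$ coincide.

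There is no real obstacle here; the only thing to be a little careful about is the coset‑invariance step, namely confirming that $[xz_1, yz_2] = [x,y]$ whenever $z_1, z_2 \in Z(G)$, which is immediate from the definition of the center. Everything else is bookkeeping that follows directly from the commutativity of diagram~\eqref{eqn1}.
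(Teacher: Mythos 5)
Your proof is correct and follows essentially the same route as the paper's: your fibers $a_G^{-1}(g)$ are exactly the paper's sets $S_g$, and both arguments combine the coset-invariance identity $|\{(x,y):[x,y]=g\}|=|Z(G)|^2\,|a_G^{-1}(g)|$ with the fiber bijection supplied by the commutative diagram and the equality $|G:Z(G)|=|H:Z(H)|$. Nothing further is needed.
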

\begin{proof}
 Since $(\alpha, \beta)$ is an isoclinism from $G$ onto $H$,  diagram \eqref{eqn1} commutes. Let $g \in \gamma_2(G)$ be the given element. Consider the sets $S_g = \{(g_1Z(G), g_2Z(G)) \in G/Z(G)  \times G/Z(G) : [g_1, g_2] = g\}$ and $S_{\beta(g)} = \{(h_1Z(H), h_2Z(H)) \in H/Z(H)  \times H/Z(H) : [h_1, h_2] = \beta(g)\}$. Since the above diagram  commutes, it follows that $|S_g| = |S_{\beta(g)}|$. Since the maps $a_{G}$ and $a_{H}$ are  well defined, we have $|\{(g_1, g_2) \in G \times G : [g_1, g_2] = g\}| = |Z(G)|^2 |S_g|$ and $|\{(h_1, h_2) \in H \times H : [h_1, h_2] = \beta(g)\}| = |Z(H)|^2 |S_{\beta(g)}|$. Notice that $|G : Z(G)| = |H : Z(H)|$.  Hence
\[
{\Pr}_g(G) = \frac{|S_g|}{|G : \Z(G)|^2} = \frac{|S_{\beta(g)}|}{|H : Z(H)|^2 }  = {\Pr}_{\beta(g)}(H).
\]
This completes the proof. 
\end{proof}

We remark that Theorem \ref{2thm1} was proved by P. Lescot \cite[Lemma 2.4]{pL95} for $g = 1$.

Now we prove the following interesting result.
\begin{prop}\label{3prop1}
Let $G$ be a finite nilpotent group such that $\gamma_2(G)$ is finite of prime power order of some prime integer $p$. Then $G$ is isoclinic to a finite $p$-group $H$ such that $\Z(H) \le \gamma_2(H)$.
\end{prop}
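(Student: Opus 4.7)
The plan is to reduce to a $p$-group via the nilpotent decomposition and then invoke Theorem \ref{2thm}. Since $G$ is finite nilpotent, write $G = P \times A$ where $P$ is the Sylow $p$-subgroup of $G$ and $A$ is the direct product of the remaining Sylow subgroups. From $\gamma_2(G) = \gamma_2(P) \times \gamma_2(A)$ and the hypothesis that $\gamma_2(G)$ is a $p$-group, it will follow that $\gamma_2(A) = 1$, so $A$ is abelian.

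Next I would argue that $G$ is isoclinic to $P$, so that the abelian factor $A$ disappears. Indeed, $Z(G) = Z(P) \times A$ and $\gamma_2(G) = \gamma_2(P) \times \{1\}$, whence the projection $G/Z(G) \to P/Z(P)$ is an isomorphism $\alpha$ and the inclusion $\gamma_2(P) \hookrightarrow \gamma_2(G)$ is an isomorphism $\beta$; commutativity of the required diagram is immediate, since commutators in $G = P \times A$ only involve the $P$-coordinates.

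Now apply Theorem \ref{2thm} to $P$: there exists a group $H$ isoclinic to $P$ with $Z(H) \le \gamma_2(H)$. By transitivity of isoclinism, $G$ is isoclinic to $H$. It only remains to verify that $H$ is a finite $p$-group. This will follow in three quick steps: the isoclinism provides $\gamma_2(H) \cong \gamma_2(P)$, which is a (finite) $p$-group; the hypothesis $Z(H) \le \gamma_2(H)$ then forces $Z(H)$ to be a (finite) $p$-group; and finally $H/Z(H) \cong P/Z(P)$ is a $p$-group. Combining the orders of $Z(H)$ and $H/Z(H)$ shows $H$ is a finite $p$-group, as required.

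The steps are all essentially bookkeeping with isoclinism, and no serious obstacle is expected. The one place to be slightly careful is the reduction $G \sim P$: one must verify explicitly that the candidate $\alpha$ and $\beta$ make diagram \eqref{eqn1} commute, which is where the assumption that $A$ is abelian (forced by $\gamma_2(G)$ being a $p$-group) is used in an essential way.
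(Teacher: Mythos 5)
Your proof is correct, but it runs in the opposite order from the paper's. The paper applies Theorem \ref{2thm} directly to $G$ to obtain $H$ with $\Z(H)\le\gamma_2(H)$, notes that $H$ is nilpotent because $G$ is (implicitly using that isoclinic groups have the same nilpotency class), decomposes $H$ into its Sylow subgroups, and shows that any Sylow $q$-subgroup of $H$ with $q\ne p$ would have to be abelian, hence central, hence contained in the $p$-group $\gamma_2(H)$ --- a contradiction. You instead decompose $G=P\times A$ first, observe that the hypothesis on $\gamma_2(G)$ forces $A$ abelian, pass to $P$ via the standard isoclinism $G\sim P$ (checking diagram \eqref{eqn1} directly), and only then invoke Theorem \ref{2thm} on $P$; finally you deduce that $H$ is a finite $p$-group by multiplying the orders of $\Z(H)$ and $H/\Z(H)$. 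Your route buys two small advantages: it does not need the (true but unproved in the paper) fact that isoclinism transports nilpotency from $G$ to $H$, and it establishes finiteness of $H$ outright rather than taking it from the statement of Theorem \ref{2thm}, which as quoted does not assert it. The paper's route is shorter in that it needs no explicit verification of a commuting diagram. Both arguments are sound; each step of yours checks out, including the identifications $\Z(G)=\Z(P)\times A$ and $\gamma_2(G)=\gamma_2(P)\times\{1\}$ that make $(\alpha,\beta)$ an isoclinism.
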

\begin{proof}
By Theorem \ref{2thm}  there exists a group $H$ isoclinic to $G$ such that $\Z(H) \le \gamma_2(H) \cong \gamma_2(G)$. Since $G$ is nilpotent, $H$ is nilpotent and therefore it can be written as a direct sum of its Sylow $p$-subgroups. We claim that $H$ is a $p$-group, where $p$ is the prime integer given in the statement. Suppose that $q$ is a prime integer not equal to $p$ and $q$ divides $|H|$.  Then $H$ has a Sylow $q$-subgroup $Q$ (say) such that $\gamma_2(Q)$ is a $q$-group contained in $\gamma_2(G)$.  But, by our supposition, $\gamma_2(G)$ is a $p$-group. Thus $\gamma_2(Q)$ must be trivial. This shows  that $Q$ is abelian and therefore $Q \le \Z(H) \le \gamma_2(H)$, which is a contradiction to the fact that $\gamma_2(H) \cong \gamma_2(G)$ is a $p$-group. Hence our claim is true and the proof is complete. \hfill $\Box$

\end{proof}

A  $p$-group $G$ is said to be \emph{special} if $\Z(G) = \gamma_2(G) = \Phi(G)$, where $\Phi(G)$ denotes the Frattini subgroup of $G$.
 A special $p$-group $G$ is called \emph{extraspecial} if the order of $\Z(G)$ is $p$. Notice that a $p$-group $G$  is extraspecial if $\Z(G) = \gamma_2(G)$ is of order $p$.

\begin{cor}\label{3cor1}
Let $G$ be a finite group such that $|\gamma_2(G)| = p$ and $\gamma_2(G) \le \Z(G)$, where $p$ is a prime integer. Then $G$ is isoclinic to an extraspecial $p$-group.
\end{cor}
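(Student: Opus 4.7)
The plan is to invoke Proposition \ref{3prop1} and then pin down the isoclinic $p$-group tightly enough to force it to be extraspecial. First I would verify that $G$ satisfies the hypotheses of Proposition \ref{3prop1}. Since $\gamma_2(G) \le \Z(G)$, the group $G$ has nilpotency class at most $2$, and in particular $G$ is nilpotent. Combined with $|\gamma_2(G)| = p$, this puts $G$ in the setting of the proposition.

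Hence there exists a finite $p$-group $H$ isoclinic to $G$ with $\Z(H) \le \gamma_2(H)$, and the isoclinism $(\alpha, \beta)$ supplies an isomorphism $\beta : \gamma_2(G) \to \gamma_2(H)$, so $|\gamma_2(H)| = p$ as well. I would then note that $H$ is non-abelian (because $\gamma_2(H)$ is non-trivial), so being a non-trivial finite $p$-group it has non-trivial center. The chain $1 \neq \Z(H) \le \gamma_2(H)$ together with $|\gamma_2(H)| = p$ forces $\Z(H) = \gamma_2(H)$, a group of order $p$. By the definition of extraspecial $p$-group recalled immediately before the corollary, $H$ is extraspecial, and we are done.

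No serious obstacle is anticipated; the content is entirely in Proposition \ref{3prop1} together with the fact that $\gamma_2$ is preserved up to isomorphism under isoclinism. The only point that deserves care is that the nilpotency hypothesis of Proposition \ref{3prop1} is not assumed in the corollary and has to be extracted from the condition $\gamma_2(G) \le \Z(G)$. Once this is observed, the remainder of the argument is a straightforward chase through the definitions of isoclinism and of extraspecial $p$-group.
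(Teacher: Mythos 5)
Your proof is correct and follows exactly the route the paper intends: the corollary is stated without proof as an immediate consequence of Proposition \ref{3prop1}, and your argument (class $\le 2$ from $\gamma_2(G)\le \Z(G)$, then $1\neq \Z(H)\le\gamma_2(H)$ with $|\gamma_2(H)|=p$ forcing $\Z(H)=\gamma_2(H)$ of order $p$) supplies precisely the missing details, using the paper's own remark that $\Z(H)=\gamma_2(H)$ of order $p$ suffices for extraspeciality.
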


The following theorem follows from \cite{nI53}, \cite{kI02} and the definition of isoclinism.

\begin{thm}\label{3thm2}
Let $G$ be a finite group having only two different conjugacy class sizes $1$ and $m$ (say). Then $G$ is isoclinic to a finite $p$-group of class at most $3$ for some prime integer $p$. Moreover, $m = p^r$ for some positive integer $r$.
\end{thm}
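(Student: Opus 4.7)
The plan is to combine the two deep results cited (Ito's and Ishikawa's theorems) with a routine verification of isoclinism. First I would invoke Ito's theorem from \cite{nI53}, which asserts that any finite group whose set of conjugacy class sizes has exactly two elements is, up to abelian direct factors, a $p$-group for some prime $p$. Concretely, this gives a decomposition $G = P \times A$ with $P$ a non-abelian $p$-group and $A$ abelian. Since every element of the abelian factor $A$ is central in $G$, the non-trivial class size $m$ must equal a non-trivial class size in $P$, hence $m = p^r$ for some positive integer $r$, which handles the second assertion.

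Next I would construct an explicit isoclinism from $G$ onto $P$. Because $A \le \Z(G)$, we have $\gamma_2(G) = \gamma_2(P)$ and $\Z(G) = \Z(P) \times A$, so the projection $G \onto P$ descends to an isomorphism
\[
\alpha \colon G/\Z(G) = (P \times A)/(\Z(P) \times A) \longrightarrow P/\Z(P),
\]
and we set $\beta \colon \gamma_2(G) \to \gamma_2(P)$ to be the identity. To see that the diagram \eqref{eqn1} with $H = P$ commutes, note that for $(x_1, a_1), (x_2, a_2) \in P \times A$ one has $[(x_1, a_1), (x_2, a_2)] = [x_1, x_2]$, which is exactly $a_P(\alpha(x_1 \Z(G)), \alpha(x_2 \Z(G)))$. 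Thus $(\alpha, \beta)$ is an isoclinism from $G$ onto $P$.

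Finally, I would note that $P$ inherits the two-class-size property from $G$ (conjugation by $A$ is trivial, so conjugacy classes in $G$ are precisely those of $P$ multiplied by $A$), so Ishikawa's theorem from \cite{kI02} applies to $P$ and gives the desired bound: the nilpotency class of $P$ is at most $3$. There is no real obstacle here beyond the citations themselves; the only care needed is in spelling out the commutativity of the isoclinism diagram and checking that the two-class-size hypothesis descends from $G = P \times A$ to $P$, both of which are immediate from the fact that $A$ is a central direct factor.
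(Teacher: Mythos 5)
Your proof is correct and follows exactly the route the paper intends: the paper offers no written proof, merely stating that the theorem "follows from \cite{nI53}, \cite{kI02} and the definition of isoclinism," and your argument is precisely the fleshing-out of that citation — Ito's decomposition $G = P \times A$, the explicit isoclinism collapsing the central abelian factor, and Ishikawa's class bound applied to $P$. The details you supply (commutativity of the diagram, descent of the two-class-size hypothesis to $P$) are all verified correctly.
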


The following result is due to I. D. Macdonald  \cite[Corollary 2.4]{iM81}.

\begin{thm}\label{3thm3}
Any finite Camina $p$-group of nilpotency class $2$ is special.
\end{thm}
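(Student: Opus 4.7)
The plan is to establish both equalities $Z(G) = \gamma_2(G)$ and $\gamma_2(G) = \Phi(G)$ directly from the Camina condition, exploiting the fact that in a class-$2$ group the commutator induces homomorphisms in each variable.

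First I would verify that $Z(G) = \gamma_2(G)$. Since $G$ has class $2$, the inclusion $\gamma_2(G) \le Z(G)$ is automatic. Conversely, suppose there exists $z \in Z(G) - \gamma_2(G)$. Then the conjugacy class $z^G$ equals $\{z\}$, while the Camina hypothesis forces $z^G = z\,\gamma_2(G)$. Combining these gives $\gamma_2(G) = 1$, which contradicts the assumption that $G$ has class exactly $2$. Hence $Z(G) \subseteq \gamma_2(G)$, giving the claimed equality.

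Next I would show $\Phi(G) = \gamma_2(G)$. For a finite $p$-group one has $\Phi(G) = G^p \gamma_2(G)$, so it suffices to show that $x^p \in \gamma_2(G)$ for every $x \in G$; the case $x \in \gamma_2(G)$ is trivial. For $x \in G - \gamma_2(G)$, I would compute $(x^p)^G$ explicitly. Since $\gamma_2(G) \le Z(G)$, for each $y \in G$ we have
\[
y^{-1} x^p y = (x[x,y])^p = x^p [x,y]^p.
\]
As $y$ ranges over $G$, the set $\{[x,y] : y \in G\}$ equals $\gamma_2(G)$ by the Camina condition applied to $x$, and the $p$-th power map is a homomorphism on the abelian group $\gamma_2(G)$. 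Therefore
\[
(x^p)^G = x^p \,\gamma_2(G)^p.
\]

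Now if $x^p \notin \gamma_2(G)$, then the Camina property applied to $x^p$ gives $(x^p)^G = x^p \gamma_2(G)$, and comparing with the display above yields $\gamma_2(G)^p = \gamma_2(G)$. But $\gamma_2(G)$ is a nontrivial finite $p$-group, so its Frattini-type subgroup $\gamma_2(G)^p$ is proper, a contradiction. Hence $x^p \in \gamma_2(G)$ for all $x \in G$, so $G^p \le \gamma_2(G)$ and $\Phi(G) = \gamma_2(G)$. Combined with the first step, this proves that $G$ is special. The only mildly delicate point is the computation of $(x^p)^G$; everything else is a direct application of the Camina definition.
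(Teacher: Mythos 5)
Your argument is correct, and it is worth pointing out that the paper itself offers no proof of this statement: Theorem \ref{3thm3} is simply quoted from Macdonald \cite[Corollary 2.4]{iM81}. So your self-contained argument is a genuine addition rather than a variant of an internal proof. Both halves check out. For $\Z(G)=\gamma_2(G)$: a central $z$ outside $\gamma_2(G)$ would have $z^G=\{z\}$ and simultaneously $z^G=z\gamma_2(G)$ by the Camina condition, forcing $\gamma_2(G)=1$ and contradicting class exactly $2$; this is the standard observation that a Camina group of class $2$ has $\Z(G)\le\gamma_2(G)$. For $\Phi(G)=\gamma_2(G)$: the computation $y^{-1}x^py=(x[x,y])^p=x^p[x,y]^p$ is valid because $[x,y]$ is central, the Camina condition gives $[x,G]=\gamma_2(G)$, and since $\gamma_2(G)$ is abelian the set of $p$-th powers $\gamma_2(G)^p$ is a subgroup which, for a nontrivial finite $p$-group, is proper; hence $(x^p)^G=x^p\gamma_2(G)^p$ is strictly smaller than $x^p\gamma_2(G)$ and $x^p$ cannot lie outside $\gamma_2(G)$. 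Together with $\Phi(G)=G^p\gamma_2(G)$ this gives $\Phi(G)=\gamma_2(G)=\Z(G)$, i.e.\ $G$ is special. The only stylistic remark is that in the first step you should say explicitly that ``class exactly $2$'' guarantees $\gamma_2(G)\neq 1$, which is what makes the conclusion $\gamma_2(G)=1$ a contradiction; otherwise the argument is complete.
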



\section{Proof of Theorem A}

For a group $G$ and any element $x \in G$, $[x, G]$ denotes the set $\{[x, g] = x^{-1}g^{-1}xg \mid g \in G\}$.
We start with the following extremely useful expression for ${\Pr}_g(G)$ from \cite{DN10}. We mention a slightly modified proof here.

\begin{lemma}\label{4lemma1}
Let $G$ be a finite group and $g \in G$. Then
\[
 {\Pr}_g(G) = \frac{1}{|G|}{\sum_{g \in [x, G]}}\frac{1}{|x^G|}.
\]
\end{lemma}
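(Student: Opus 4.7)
The plan is to establish the formula by a double-counting argument, partitioning the pairs counted by $\Pr_g(G)$ according to their first coordinate. By definition, $|G|^2 \Pr_g(G) = |\{(x,y) \in G \times G : [x,y] = g\}|$, so it suffices to show that
\[
|\{(x,y) \in G \times G : [x,y] = g\}| = \sum_{x \in G,\, g \in [x,G]} |C_G(x)|,
\]
and then substitute $|C_G(x)| = |G|/|x^G|$ by the orbit--stabilizer theorem applied to the conjugation action of $G$ on itself.

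The main technical point is to analyze, for each fixed $x \in G$, the fiber of the map $\varphi_x : G \to G$ sending $y \mapsto [x,y] = x^{-1} x^y$. First I would observe that $\varphi_x$ takes the value $g$ somewhere if and only if $g \in [x,G]$ by definition. Then, to determine the size of the (nonempty) fibers, I would use the standard trick: if $[x,y_1] = [x,y_2]$, then $x^{y_1} = x^{y_2}$, hence $y_2 y_1^{-1} \in C_G(x)$, i.e.\ $y_1 \in C_G(x)\,y_2$. Conversely every element of the coset $C_G(x)\,y_2$ yields the same commutator with $x$. Therefore each nonempty fiber of $\varphi_x$ is a right coset of $C_G(x)$, so has cardinality $|C_G(x)|$.

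Combining these two observations, for each $x$ with $g \in [x,G]$ there are exactly $|C_G(x)|$ choices of $y$ making $[x,y] = g$, and for other $x$ there are none. Summing over $x$ gives the displayed identity above; then dividing by $|G|^2$ and replacing $|C_G(x)|$ by $|G|/|x^G|$ yields
\[
\Pr_g(G) = \frac{1}{|G|^2} \sum_{x \in G,\, g \in [x,G]} \frac{|G|}{|x^G|} = \frac{1}{|G|} \sum_{g \in [x,G]} \frac{1}{|x^G|},
\]
which is exactly the claimed formula. There is no real obstacle here; the only subtlety worth flagging is the fiber-size computation for $\varphi_x$, since it is what converts a sum over pairs into a sum indexed by the single variable $x$ with weights coming from conjugacy class sizes.
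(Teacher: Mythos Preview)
Your proof is correct and follows essentially the same route as the paper's: partition the set of pairs $(x,y)$ with $[x,y]=g$ by the first coordinate, observe that for fixed $x$ the solution set in $y$ is either empty (when $g\notin[x,G]$) or a single coset of $C_G(x)$, and then convert $|C_G(x)|$ to $|G|/|x^G|$. The only cosmetic difference is that you phrase the fiber computation via the map $\varphi_x$ while the paper names the fiber $T_x$ and writes it as a coset directly.
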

\begin{proof}
Notice that $\{(x, y) \in G \times G : [x, y] = g\} = \underset{x \in G}{\cup}(\{x\} \times T_x)$, where $T_x = \{y \in G : [x, y] = g\}$. Further notice that, for any $x \in G$, the set $T_x$ is non-empty if and only if $xg \in x^G$.
Suppose that $T_x$ is non-empty for some $x \in G$. Fix an element $t \in T_x$. It is easy to see that $T_x = tC_G(x)$. The proof of the lemma now follows from the definition of ${\Pr}_g(G)$.  \hfill $\Box$

\end{proof}

For a finite group $G$, by $b(G)$ we denote the size of the largest conjugacy class in $G$. Notice that $b(G) \le |\gamma_2(G)|$.
For  $g=1$,  the preceding lemma gives the following result.
\begin{prop}\label{4prop1}
Let $G$ be  a  finite group. Then
\[
\Pr(G) \geq \frac{1}{|b(G)|}\left(1 + \frac{|b(G)| - 1}{|G:Z(G)|}\right).
\]
 Moreover,\\
(i)  Equality holds if and only if $b(G) = |x^G|$ for all $x \in G - \Z(G)$;\\
(ii) If $G$ is non-abelian, then $\Pr(G) > \frac{1}{|b(G)|}$.
\end{prop}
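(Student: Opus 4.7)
The plan is to apply Lemma~\ref{4lemma1} with $g = 1$. Since $[x,x] = 1$ for every $x \in G$, the condition $1 \in [x,G]$ is satisfied by every element, so the sum in Lemma~\ref{4lemma1} ranges over all of $G$:
\[
\Pr(G) \;=\; \frac{1}{|G|} \sum_{x \in G} \frac{1}{|x^G|}.
\]

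First I would split the sum according to whether $x$ lies in $\Z(G)$. For $x \in \Z(G)$ we have $|x^G| = 1$, contributing $|\Z(G)|$ to the sum. For $x \notin \Z(G)$ I would use the definitional bound $|x^G| \le b(G)$, which gives $1/|x^G| \ge 1/b(G)$, and there are exactly $|G| - |\Z(G)|$ such elements. Combining these estimates and rearranging,
\[
\Pr(G) \;\ge\; \frac{1}{|G|}\left(|\Z(G)| + \frac{|G| - |\Z(G)|}{b(G)}\right) \;=\; \frac{1}{b(G)}\left(1 + \frac{b(G) - 1}{|G:\Z(G)|}\right),
\]
which is the desired lower bound.

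For part (i), the only inequality invoked is $|x^G| \le b(G)$ for $x \notin \Z(G)$. Term by term this is an equality precisely when $|x^G| = b(G)$, and since all summands are positive, the overall inequality is tight if and only if $|x^G| = b(G)$ for every $x \in G - \Z(G)$. For part (ii), if $G$ is non-abelian then $b(G) \ge 2$ and $|G:\Z(G)|$ is finite, so $(b(G)-1)/|G:\Z(G)| > 0$ and the already-established lower bound is strictly larger than $1/b(G)$, yielding $\Pr(G) > 1/b(G)$. No step poses a real obstacle: the entire argument is a one-line application of Lemma~\ref{4lemma1} combined with the trivial bound $|x^G| \le b(G)$, and the characterization of equality is immediate from the structure of the estimate.
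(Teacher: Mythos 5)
Your proof is correct and is exactly the argument the paper intends: the paper states Proposition \ref{4prop1} as an immediate consequence of Lemma \ref{4lemma1} with $g=1$ (noting $1\in[x,G]$ for all $x$) and supplies no further details, so your splitting of the class equation sum over $\Z(G)$ and its complement, together with the trivial bound $|x^G|\le b(G)$, is precisely the intended route. Both the equality characterization in (i) and the strict inequality in (ii) are handled correctly.
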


\begin{thm}\label{4thm1}
Let $G$ be a finite nilpotent group of class $2$ such that $|\gamma_2(G)| = p^r$ and $[x, G] = \gamma_2(G)$ for all $x \in G - \Z(G)$, where $p$ is  a prime  and $r$ is a positive integer. Then 
\[
{\Pr}_g(G) = \begin{cases} \frac{1}{p^r}\left(1 + \frac{p^r - 1}{p^{2m}}\right) \text{ if } g = 1\\
\frac{1}{p^r}\left(1 - \frac{1}{p^{2m}}\right) \text{ if } g \ne 1,
\end{cases}
\]
for some positive integer $m$. Moreover, $m$ is independent of the choice of $G$ in its isoclinism family.
\end{thm}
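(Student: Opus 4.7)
My plan is to apply Lemma~\ref{4lemma1} after extracting the structural consequences of the hypothesis $[x,G] = \gamma_2(G)$ for $x \in G - \Z(G)$.

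First, since $G$ has class~$2$, $x^G = x[x,G]$, giving $|x^G| = 1$ for $x \in \Z(G)$ and $|x^G| = p^r$ for $x \in G - \Z(G)$. Moreover $1 \in [x,G]$ for every $x$, while for $g \ne 1$ the condition $g \in [x,G]$ is equivalent to $x \notin \Z(G)$.

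The heart of the argument is showing $|G : \Z(G)| = p^{2m}$ for some positive integer $m$. Because $G$ is nilpotent and $\gamma_2(G)$ is a $p$-group, the non-$p$ Sylow subgroups are abelian and central, so $\bar G := G/\Z(G)$ is a $p$-group. The class-$2$ identity $[x^p,y] = [x,y]^p$ gives $[x^p,G] \subseteq \gamma_2(G)^p \subsetneq \gamma_2(G)$, so the hypothesis forces $x^p \in \Z(G)$; hence $\bar G$ is elementary abelian, and the same identity shows that $\gamma_2(G)$ has exponent $p$. View the commutator as an alternating $\mathbb{F}_p$-bilinear map $\bar G \times \bar G \to \gamma_2(G)$. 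For any nonzero $\phi \in \Hom(\gamma_2(G), \mathbb{F}_p)$, the composition $\phi \circ [\cdot,\cdot]$ is a scalar-valued alternating form whose radical equals $\{\bar x : [x,G] \subseteq \ker\phi\}$; this is trivial because $[x,G] = \gamma_2(G) \not\subseteq \ker\phi$ whenever $\bar x \ne 0$. Thus the scalar form is non-degenerate, forcing $\dim_{\mathbb{F}_p}\bar G$ to be even, and positive since $r \ge 1$ prevents $G$ from being abelian.

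Plugging into Lemma~\ref{4lemma1} yields $\Pr_1(G) = |G|^{-1}\bigl(|\Z(G)| + (|G| - |\Z(G)|)/p^r\bigr)$ and $\Pr_g(G) = |G|^{-1}(|G| - |\Z(G)|)/p^r$ for $g \ne 1$; substituting $|\Z(G)|/|G| = p^{-2m}$ produces the two claimed formulas. Invariance of $m$ under isoclinism is immediate, since any isoclinism induces an isomorphism $G/\Z(G) \cong H/\Z(H)$. The only mildly subtle step is the parity argument: it relies on first establishing that both $\bar G$ and $\gamma_2(G)$ are elementary abelian $p$-groups, so that the $\gamma_2(G)$-valued alternating form can be converted into scalar-valued non-degenerate alternating forms for the standard symplectic evenness conclusion.
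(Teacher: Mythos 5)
Your proof is correct, but it reaches the crucial structural fact $|G:\Z(G)|=p^{2m}$ by a genuinely different route than the paper. The paper first passes, via Proposition \ref{3prop1} and Theorem \ref{2thm1}, to an isoclinic $p$-group $H$ with $\Z(H)\le\gamma_2(H)$, observes that the hypothesis makes $H$ a Camina group of class $2$, and then quotes Macdonald's theorem from \cite{iM81} for the statement $|H/\Z(H)|=p^{2m}$; only afterwards does it run the counting argument of Lemma \ref{4lemma1} on $H$. You instead stay with $G$ throughout and prove the evenness of $\dim_{\mathbb{F}_p} G/\Z(G)$ from scratch: the identity $[x^p,y]=[x,y]^p$ together with the hypothesis forces $G/\Z(G)$ and $\gamma_2(G)$ to be elementary abelian, and then composing the commutator form with a nonzero functional $\phi\in\Hom(\gamma_2(G),\mathbb{F}_p)$ yields a non-degenerate alternating form on $G/\Z(G)$ (non-degeneracy is exactly where $[x,G]=\gamma_2(G)\not\subseteq\ker\phi$ is used), whence the even dimension. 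This is essentially a self-contained reproof of the relevant part of Macdonald's result, so your argument buys independence from \cite{iM81} and avoids the isoclinism reduction entirely, at the cost of the short symplectic digression; the paper's route is shorter on the page but leans on two external inputs. Your final counting step and the isoclinism-invariance of $m$ coincide with the paper's. Two small points worth making explicit if you write this up: $\gamma_2(G)^p\subsetneq\gamma_2(G)$ because a nontrivial finite $p$-group has a proper Frattini subgroup, and the formulas as stated implicitly take $g\in\gamma_2(G)$ (for $g\notin\gamma_2(G)$ one trivially has ${\Pr}_g(G)=0$).
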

\begin{proof}
Let $G$ be  a group  as given in the statement.  Then it follows  from Proposition  \ref{3prop1} that $G$ is isoclinic to a Camina  $p$-group $H$ (say) of class $2$. Suppose that $(\alpha, \beta)$ is an isoclinism between $G$ and $H$. It follows from \cite[Theorem 3.2]{iM81} that  $|H/\Z(H)| = p^{2m}$ for some positive integer $m$. Now by Theorem \ref{2thm1}, it follows that ${\Pr}_g(G) = {\Pr}_{\beta(g)}(H)$. So, now onwards we work with $H$ only.
Notice that $x^H = x\gamma_2(H)$ for all $x \in H - \Z(H)$.  Thus $G$ has only two conjugacy class sizes, namely, $1$ and $|\gamma_2(G)|$ and therefore   $b(H) = |\gamma_2(H)| = p^r$ for all $x \in H - \Z(H)$.  If $h = 1$, then  it follows from  Proposition \ref{4prop1}   that 
\[
{\Pr}_h(H) =  \frac{1}{p^r}\left(1 + \frac{p^r - 1}{|H : \Z(H)|}\right). 
\]

Now assume that $h \ne 1$.   Notice that if $x \in \Z(H)$ and $h \in [x, H]$, then $h = 1$.  Since $h \in [x, H]$ and $|x^H| = b(H)$ for all $x \in H - \Z(H)$,  by  Lemma \ref{4lemma1} we have 
\begin{align*}
{\Pr}_h(H) & = \frac{1}{|H|}\sum_{x \in H - \Z(H)}\frac{1}{|x^H|} = \frac{|H - \Z(H)|}{|b(H)||H|}\\
& = \frac{1}{b(G)}\left(1 - \frac{1}{|H : \Z(H)|}\right). 
\end{align*}

Since $|H:\Z(H)|  = p^{2m}$, we have
\[
{\Pr}_h(H) = \begin{cases} \frac{1}{p^r}\left(1 + \frac{p^r - 1}{p^{2m}}\right) \text{ if } h = 1\\
\frac{1}{p^r}\left(1 - \frac{1}{p^{2m}}\right) \text{ if } h \ne 1.
\end{cases}
\] 
Since $\beta$ is an isomorphism from $\gamma_2(G)$ onto $\gamma_2(H)$, for each $g \in \gamma_2(G)$, there exists an $h \in \gamma_2(H)$ such that $h = \beta(g)$. The fact that $|G_1/\Z(G_1)| = |G_2/\Z(G_2)|$ for any two isoclinic finite groups $G_1$ and $G_2$ shows that the integer $m$ appeared above is independent of the choice of $G$ in its isoclinism family.  This   completes the proof of the theorem.   \hfill $\Box$

\end{proof}

Let $G$ be a  finite group and $H$ be it's normal subgroup of order $p$, where $p$ is a prime dividing $|G|$ such that $(|G|, p-1) = 1$. Then it follows from  $N/C$ lemma (i.e., $N_G(H)/C_G(H)$ embeds in $\Aut(H)$) that $H$ is a central subgroup of $G$. As a corollary of the preceding theorem, we  get the following interesting result.
\begin{cor}
Let $G$ be a  finite group with $|\gamma_2(G)| = p$, where $p$ is a  prime integer.  If either $G$ is nilpotent or $p$ is a prime dividing $|G|$ such that $(|G|, p-1) = 1$,  then \\
(i)  $G$ is isoclinic to an extraspecial $p$-group;\\
(ii) for $g \in \gamma_2(G)$, we have
\[
{\Pr}_g(G) = \begin{cases} \frac{1}{p}\left(1 + \frac{p - 1}{p^{2m}}\right) \text{ if } g = 1\\
\frac{1}{p}\left(1 - \frac{1}{p^{2m}}\right) \text{ if } g \ne 1,
\end{cases}
\] for some positive integer $m$.
\end{cor}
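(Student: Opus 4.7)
The plan is to verify that $\gamma_2(G) \le \Z(G)$ under either hypothesis, after which Corollary \ref{3cor1} yields part (i) and Theorem \ref{4thm1} (with $r = 1$) yields part (ii).

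First I would dispose of the two hypotheses separately. If $G$ is nilpotent, then every nontrivial normal subgroup of $G$ meets $\Z(G)$ nontrivially; applied to the normal subgroup $\gamma_2(G)$ of prime order $p$, this forces $\gamma_2(G) \le \Z(G)$. If instead $(|G|, p-1) = 1$, then $\gamma_2(G)$ is a normal subgroup of $G$ of order $p$, and the $N/C$ observation recorded immediately before the corollary applies verbatim with $H := \gamma_2(G)$: the quotient $G/C_G(\gamma_2(G))$ embeds into $\Aut(\gamma_2(G))$, a cyclic group of order $p-1$, so its order divides both $|G|$ and $p-1$ and is therefore $1$. Either way $\gamma_2(G) \le \Z(G)$.

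Part (i) is now immediate from Corollary \ref{3cor1}, whose hypotheses $|\gamma_2(G)| = p$ and $\gamma_2(G) \le \Z(G)$ are exactly what we just verified. For part (ii), I would check that $G$ itself fits the hypotheses of Theorem \ref{4thm1}. Since $\gamma_2(G) \le \Z(G)$ and $\gamma_2(G) \ne 1$, the group $G$ is nilpotent of class exactly $2$. For any $x \in G - \Z(G)$, the set $[x, G]$ is a nontrivial subgroup of $\gamma_2(G)$, and since $|\gamma_2(G)| = p$ it must coincide with $\gamma_2(G)$. Thus Theorem \ref{4thm1} applies with $r = 1$ and returns precisely the two-case formula in the statement; the positive integer $m$ is pinned down by $p^{2m} = |H/\Z(H)|$, where $H$ is the extraspecial partner of $G$ furnished by part (i), and is independent of the representative of the isoclinism family by the last sentence of Theorem \ref{4thm1}.

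I do not foresee any substantive obstacle: all the work is in establishing the centrality $\gamma_2(G) \le \Z(G)$, after which the conclusions are direct citations of the earlier machinery. The mild subtlety is that in the coprimality case $G$ need not be assumed nilpotent at the outset, but the centrality step instantly upgrades $G$ to a class-$2$ nilpotent group, so Theorem \ref{4thm1} remains applicable.
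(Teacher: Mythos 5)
Your proposal is correct and follows essentially the same route as the paper: establish $\gamma_2(G)\le \Z(G)$ in both cases (the paper uses the same $N/C$ observation for the coprimality case and treats the nilpotent case as immediate), then invoke Corollary \ref{3cor1} for (i) and Theorem \ref{4thm1} with $r=1$ for (ii). Your extra check that $[x,G]=\gamma_2(G)$ for all $x\in G-\Z(G)$ is a welcome explicit verification of a hypothesis the paper leaves implicit.
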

\begin{proof}
Notice that  $\gamma_2(G)$ is a central subgroup of $G$ in both of the cases. Thus $G$ is a nilpotent group of class $2$ and therefore,  by Corollary \ref{3cor1}, it is isoclinic to an extraspecial $p$-group. Now  the result follows from  Theorem \ref{4thm1} for $r = 1$. \hfill $\Box$

\end{proof}

In the following result we characterize (upto isoclinism) finite groups $G$ such that $\Pr(G) = \frac{1}{|b(G)|}\left(1 + \frac{|b(G)| - 1}{|G:Z(G)|}\right)$. 
\begin{prop}
Let $G$ be a non-abelian finite group such that $\Pr(G) = \frac{1}{|b(G)|}\left(1 + \frac{|b(G)| - 1}{|G:Z(G)|}\right)$. Then $G$ is isoclinic to a finite $p$-group having only two different conjugacy class sizes $1$ and $p^r$ for some prime $p$ and positive integer $r$. Moreover, if $b(G) = |\gamma_2(G)|$, then  $G$ is isoclinic to  a Camina special $p$-group.
\end{prop}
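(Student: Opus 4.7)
My plan is to exploit the equality case of Proposition \ref{4prop1} and combine it with the structural results furnished by Theorems \ref{3thm2}, \ref{2thm}, and \ref{3thm3}. The starting observation is that the hypothesis $\Pr(G) = \frac{1}{|b(G)|}\bigl(1 + \frac{|b(G)| - 1}{|G:\Z(G)|}\bigr)$ is precisely the equality case of Proposition \ref{4prop1}; part (i) of that proposition immediately yields $|x^G| = b(G)$ for every $x \in G - \Z(G)$, so together with the trivial classes of central elements, $G$ has exactly the two conjugacy class sizes $1$ and $b(G)$. Theorem \ref{3thm2} then provides a finite $p$-group $H$ of nilpotency class at most $3$ isoclinic to $G$, with $b(G) = p^r$ for some positive integer $r$. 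A quick verification using the commutative diagram \eqref{eqn1} defining isoclinism shows that whenever $\tilde{x} \in H$ lifts $\alpha(x\Z(G))$, commutator vanishing transfers through $\beta$ to give $\alpha(C_G(x)/\Z(G)) = C_H(\tilde{x})/\Z(H)$, hence $|x^G| = |\tilde{x}^H|$. Consequently $H$ inherits the same two class sizes $1$ and $p^r$, which handles the first assertion.

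For the ``moreover'' part, I would further assume $b(G) = |\gamma_2(G)|$ and apply Theorem \ref{2thm} to replace $H$ by an isoclinic partner also satisfying $\Z(H) \leq \gamma_2(H)$. Combining with the previous paragraph together with $|\gamma_2(H)| = |\gamma_2(G)|$, this revised $H$ remains a $p$-group with $b(H) = |\gamma_2(H)| = p^r$, and every noncentral element of $H$ has class size $p^r$. For any such $x \in H - \Z(H)$, the general inclusion $x^H \subseteq x\gamma_2(H)$ combined with $|x^H| = p^r = |\gamma_2(H)|$ forces $x^H = x\gamma_2(H)$.

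The step I expect to be the main obstacle is ruling out the possibility $\Z(H) \subsetneq \gamma_2(H)$. The clean contradiction I have in mind is this: any $x \in \gamma_2(H) - \Z(H)$ would satisfy $x^H = x\gamma_2(H) = \gamma_2(H)$, which contains the identity $1$; but $1 \in x^H$ forces $x = 1 \in \Z(H)$, contradicting $x \notin \Z(H)$. Hence $\Z(H) = \gamma_2(H)$, and $H$ is therefore a Camina $p$-group of nilpotency class $2$, because the identity $x^H = x\gamma_2(H)$ now holds throughout $H - \gamma_2(H) = H - \Z(H)$. A final appeal to Theorem \ref{3thm3} promotes Camina of class $2$ to special, completing the proof. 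Beyond this contradiction argument, everything else is routine bookkeeping with isoclinism invariants and the preceding theorems.
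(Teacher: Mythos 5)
Your proof is correct, and its overall skeleton matches the paper's: the equality case of Proposition \ref{4prop1}(i) gives the two class sizes $1$ and $b(G)$, Theorem \ref{3thm2} supplies the isoclinic $p$-group, and Theorem \ref{3thm3} upgrades Camina of class $2$ to special. The one place where you genuinely diverge is the crux of the ``moreover'' part, namely showing the nilpotency class is $2$. The paper argues by contradiction with class $3$: for $u \in \gamma_2 - \Z$ one has $1 \ne |u^G| = |[u,G]| \le |\gamma_3(G)| < |\gamma_2(G)| = b(G)$, violating the two-class-size hypothesis. You instead first use the pigeonhole $x^H \subseteq x\gamma_2(H)$ together with $|x^H| = p^r = |\gamma_2(H)|$ to force $x^H = x\gamma_2(H)$ for every noncentral $x$, and then observe that an $x \in \gamma_2(H) - \Z(H)$ would have $1 \in x^H = \gamma_2(H)$, hence $x = 1$. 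This is arguably cleaner: it does not need the inclusion $[\gamma_2(H), H] \le \gamma_3(H)$ or the bound on the class from Ishikawa at all, and it delivers the Camina condition $x^H = x\gamma_2(H)$ directly as a byproduct rather than deducing it afterwards. Your extra verification that conjugacy class sizes are isoclinism invariants (via $\alpha(C_G(x)/\Z(G)) = C_H(\tilde{x})/\Z(H)$) is a detail the paper leaves implicit but uses; making it explicit is harmless and correct. One small point shared with the paper: Theorem \ref{2thm} by itself does not assert that the stem group is a $p$-group, but this follows since both $\Z(H) \le \gamma_2(H) \cong \gamma_2(G)$ and $H/\Z(H) \cong G/\Z(G)$ have $p$-power order, so your claim that the revised $H$ remains a $p$-group does hold.
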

\begin{proof}
Let $G$ be the group as in the statement. Then it follows from Proposition \ref{4prop1}(i) that $G$ has only two conjugacy class sizes, namely, $1$ and $b(G)$. Now it follows from Theorem \ref{3thm2}  that $G$ is isoclinic to a finite  $p$-group of nilpotency class  at most $3$ for some prime integer $p$ and $b(G) = p^r$ for some positive integer $r$. Thus the first assertion of the statement holds true.  

Now assume that $b(G) = |\gamma_2(G)|$.  We claim that  the nilpotency class of $G$ is at most $2$. Supose that the nilpotency class is $3$. Then there exists an element $u \in \gamma_2(G) - Z(G)$. Then $1 \ne |u^G| = |[u, G]| \le |\gamma_3(G)| <  |\gamma_2(G)| = b(G)$, which is a contradiction to the fact that $G$ has only two conjugacy class sizes.  Since $G$ is non-abelian, it now follows that the nilpotency class of $G$ is $2$.  As (by Theorem \ref{2thm}) there exists a finite $p$-group $H$ such that $G$ and $H$ are isoclinic and $\Z(H) \le \gamma_2(H)$,  it follows that $H$ is a Camina group of nilpotency class $2$. That $H$ is special, now follows from Theorem \ref{3thm3}. This completes the proof. \hfill $\Box$

\end{proof}

\section{Proof of Theorm B}

For any positive integer $r \ge 1$, consider the  following group constructed by Ito \cite{nI53}.
\begin{eqnarray}\label{6eqn1}
G  &= \langle x_1, \ldots, x_{r+1} \mid [x_i, x_j] = y_{ij}, [x_k, y_{ij}] = 1,\\
 & x_i^p = x_{r+1}^p = y_{ij}^p, 1 \le i < j \le r+1\rangle.\nonumber
\end{eqnarray}

Some interesting properties of this group are given in the following lemma,  proof of which follows from \cite[Example 1]{nI53}.
\begin{lemma}\label{6lemma0}
The group $G$ defined in \eqref{6eqn1} is a special $p$-group of order $p^{(r+1)(r+2)/2}$ and exponent $p$,  and $|\gamma_2(G)|  = p^{r(r+1)/2}$. This group has only two different conjugacy class sizes, namely $1$ and $p^r$.
\end{lemma}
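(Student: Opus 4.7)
My plan is to verify the four assertions in order: the order of $G$, the order of $\gamma_2(G)$, the exponent, and the conjugacy class structure (from which specialness follows). The main tool will be the bilinear form on $G/\Z(G)$ induced by commutation.

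\textbf{Normal form and orders of $G$ and $\gamma_2(G)$.} Because each $y_{ij}=[x_i,x_j]$ is central and $\gamma_2(G)=\langle y_{ij}:i<j\rangle$, any word in the generators can be collected into the shape $x_1^{a_1}\cdots x_{r+1}^{a_{r+1}}\prod_{i<j}y_{ij}^{c_{ij}}$ with $0\le a_k,c_{ij}<p$. This gives the upper bounds $|G|\le p^{(r+1)(r+2)/2}$ and $|\gamma_2(G)|\le p^{r(r+1)/2}$. To see the bounds are attained (i.e.\ the presentation does not collapse, and in particular the $y_{ij}$ are independent in $\gamma_2(G)$), I would exhibit a concrete model of the right order, either by appealing to Ito's explicit construction in \cite{nI53}, or by realising $G$ directly as a suitable unitriangular matrix group (respectively, as a central product of Heisenberg $p$-groups quotiented by a diagonal).

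\textbf{Centre.} The inclusion $\gamma_2(G)\subseteq\Z(G)$ is immediate from the relation $[x_k,y_{ij}]=1$. Conversely, write $g\in\Z(G)$ in normal form as $g=\prod_i x_i^{a_i}\cdot z$ with $z\in\gamma_2(G)$. Using centrality of the $y_{ij}$ and bilinearity of the commutator,
\[
1=[g,x_k]=\prod_{i<k}y_{ik}^{a_i}\cdot\prod_{k<j}y_{kj}^{-a_j}\qquad(1\le k\le r+1).
\]
Independence of the $y_{ij}$ forces $a_i=0$ for every $i$, so $g\in\gamma_2(G)$, whence $\Z(G)=\gamma_2(G)$. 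Combined with exponent $p$ (next paragraph), $\Phi(G)=G^p\gamma_2(G)=\gamma_2(G)=\Z(G)$, so $G$ is special.

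\textbf{Exponent $p$ and class sizes.} In a class-$2$ group the standard identity $(ab)^p=a^pb^p[b,a]^{p(p-1)/2}$ reduces an arbitrary $p$-th power to a product of $p$-th powers of the $x_i$ and $y_{ij}$ (for $p$ odd the exponent $p(p-1)/2$ is a multiple of $p$; $p=2$ is checked separately); the common value $x_i^p=x_{r+1}^p=y_{ij}^p$ is forced to be trivial by comparing $[x_{r+1}^p,x_k]=[x_{r+1},x_k]^p=y_{k,r+1}^{-p}$ with the centrality of $x_{r+1}^p$, so $G$ has exponent $p$. Finally, commutation descends to an alternating $\mathbb{F}_p$-bilinear map
\[
G/\Z(G)\times G/\Z(G)\longrightarrow \gamma_2(G),\qquad\Bigl(\sum a_i\bar x_i,\sum b_j\bar x_j\Bigr)\longmapsto\sum_{i<j}(a_ib_j-a_jb_i)\,y_{ij},
\]
which is nondegenerate by the centre computation. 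For $x\notin\Z(G)$ the kernel of $[x,-]$ modulo $\Z(G)$ is precisely the line $\mathbb{F}_p\bar x$, so $|C_G(x)|=p\cdot|\Z(G)|=p^{1+r(r+1)/2}$ and
\[
|x^G|=\frac{|G|}{|C_G(x)|}=\frac{p^{(r+1)(r+2)/2}}{p^{1+r(r+1)/2}}=p^r.
\]
Together with $|x^G|=1$ for $x\in\Z(G)$, this gives exactly the two class sizes $1$ and $p^r$.

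I expect the main obstacle to be the very first step: rigorously pinning down that the presentation in \eqref{6eqn1} produces a group of the stated order rather than collapsing, since Tietze-type manipulations on an unfamiliar presentation can be subtle. Once this is secured (either by citation or by an explicit matrix realisation), the centre computation, the exponent claim, and the class-size count all reduce to bookkeeping with the alternating bilinear form induced by commutation.
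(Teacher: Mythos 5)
Your proposal is correct, and it is in fact more of a proof than the paper supplies: the paper simply asserts that the lemma ``follows from \cite[Example 1]{nI53}'' and gives no argument at all. Your route --- collect to a normal form to bound $|G|$ and $|\gamma_2(G)|$, compute $\Z(G)=\gamma_2(G)$ from independence of the $y_{ij}$, deduce exponent $p$ from the centrality of the common value $x_i^p=y_{ij}^p$, and read off $|C_G(x)|=p\,|\Z(G)|$ from the induced alternating form --- is the standard way to verify all four claims, and each step checks out (the arithmetic $(r+1)(r+2)/2-r(r+1)/2-1=r$ gives the class size $p^r$ exactly as you say). The one genuine issue is the one you flag yourself: that the presentation attains, rather than merely bounds, the order $p^{(r+1)(r+2)/2}$, i.e.\ that the $y_{ij}$ really are independent. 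Citing Ito closes this exactly as the paper does, but if you want it self-contained, note that your own exponent computation shows the relations are equivalent to ``class $\le 2$, exponent $p$'' on $r+1$ generators, so $G$ is the relatively free group in that variety; the explicit model $V\oplus\Lambda^2V$ with $V=\mathbb{F}_p^{\,r+1}$ and product $(v_1,w_1)(v_2,w_2)=(v_1+v_2,\;w_1+w_2+v_1\wedge v_2)$ realises it with the correct order (this is cleaner than the central-product-of-Heisenberg-groups description you sketch, which as stated would not give independent $y_{ij}$). You should also record that the statement implicitly requires $p$ odd, both for the exponent-$p$ claim and because the paper's Lemma 4.4 assumes it.
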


We now proceed to prove Theorem B.  We start with the following technical result, which is also of independent interest.

\begin{lemma}\label{6lemma1}
Let $G$ be a finite $p$-group of nilpotency class $2$ minimally generated by $x_1, \ldots, x_d$ such that the exponent of $\gamma_2(G)$ is $p$. Let $g_1 = \Pi_{i=1}^d x_i^{\alpha_i}$ and $g_2 = \Pi_{j=1}^d x_j^{\beta_j}$ be two different elements of $G$ such that  $[g_1, g_2] \neq 1$, where  $\alpha_i$, $\beta_j$ are some non negative integers between $0$ and $p-1$. Then we can find another minimal generating set for $G$ containing $g_1, g_2$.
\end{lemma}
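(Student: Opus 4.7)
The plan is to invoke the Burnside basis theorem: since $G$ is a finite $p$-group, every minimal generating set of $G$ has the same cardinality $d = \dim_{\mathbb{F}_p}(G/\Phi(G))$, and a $d$-element subset of $G$ is a minimal generating set if and only if its image in the $\mathbb{F}_p$-vector space $V := G/\Phi(G)$ is a basis. Under the given assumption that $\{x_1, \ldots, x_d\}$ is a minimal generating set, the images $\bar{x_1}, \ldots, \bar{x_d}$ form an $\mathbb{F}_p$-basis of $V$. It therefore suffices to show that the images $\bar{g_1}, \bar{g_2} \in V$ are $\mathbb{F}_p$-linearly independent; once this is done, one extends $\{\bar{g_1}, \bar{g_2}\}$ to a basis of $V$ and lifts the remaining basis vectors to elements of $G$, obtaining a minimal generating set of $G$ containing $g_1$ and $g_2$.

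The heart of the argument is the claim that $\Phi(G) \le \Z(G)$. Since $\Phi(G) = \gamma_2(G) G^p$ and $\gamma_2(G) \le \Z(G)$ by the class-$2$ hypothesis, it is enough to check that every $p$-th power of an element of $G$ is central. In a group of nilpotency class at most $2$ one has the commutator identity $[h^p, g] = [h, g]^p$ for all $h, g \in G$; since $[h, g] \in \gamma_2(G)$ and $\gamma_2(G)$ has exponent $p$ by hypothesis, this gives $[h^p, g] = 1$ for every $g$. Thus $h^p \in \Z(G)$ and the claim follows.

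With $\Phi(G) \le \Z(G)$ established, suppose toward contradiction that $\bar{g_1}$ and $\bar{g_2}$ are linearly dependent in $V$. If $\bar{g_i} = 0$ for some $i$, then $g_i \in \Phi(G) \le \Z(G)$ and hence $[g_1, g_2] = 1$, contrary to hypothesis. Otherwise there exists $c \in \mathbb{F}_p^{\times}$ with $\bar{g_2} = c\bar{g_1}$, so $g_2 = g_1^c z$ for some $z \in \Phi(G) \le \Z(G)$. Using $[g_1, g_1^c] = 1$, the centrality of $z$, and the standard identity $[a, bc] = [a, c] [a, b]^c$, we compute $[g_1, g_2] = [g_1, g_1^c z] = [g_1, z] = 1$, again a contradiction. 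Therefore $\bar{g_1}$ and $\bar{g_2}$ are linearly independent, and extending to a basis yields the desired minimal generating set.

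The only non-routine step is the inclusion $\Phi(G) \le \Z(G)$, which depends on combining the class-$2$ identity $[h^p, g] = [h, g]^p$ with the exponent hypothesis on $\gamma_2(G)$. Everything else is linear algebra over $\mathbb{F}_p$ together with the Burnside basis theorem, so I do not anticipate any further obstacle.
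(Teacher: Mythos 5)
Your proof is correct, and it takes a genuinely different and considerably cleaner route than the paper's. The paper proves the lemma by an explicit combinatorial case analysis: it looks at the leading nonzero exponents $\alpha_k$, $\beta_l$, splits into several cases ($k\neq l$, $k=l$ with various subcases on whether $\alpha_s\beta_{s'}-\alpha_{s'}\beta_s$ vanishes), and in each case exhibits two indices $t,s$ such that replacing $x_t$ by $g_1$ and $x_s$ by $g_2$ in the original generating set still generates $G$; the verification that the displaced generators can be recovered is itself somewhat delicate (it is where the acknowledged gap in an earlier version occurred). Your argument replaces all of this with the single structural observation that $\Phi(G)=\gamma_2(G)G^p\le \Z(G)$, which follows from the class-$2$ identity $[h^p,g]=[h,g]^p$ together with $\exp\gamma_2(G)=p$; then linear dependence of $\bar g_1,\bar g_2$ in $G/\Phi(G)$ forces $g_2\in g_1^c\Z(G)$ or $g_i\in\Z(G)$, hence $[g_1,g_2]=1$, and the Burnside basis theorem finishes the extension to a minimal generating set. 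What your approach buys is not only brevity but also a slightly stronger statement: the specific normal form $g_i=\Pi_j x_j^{\alpha_j}$ with exponents in $\{0,\dots,p-1\}$ is never used, so any two elements of $G$ with $[g_1,g_2]\neq 1$ lie in a common minimal generating set. What the paper's approach buys, in principle, is an explicit description of which original generators are discarded, but that information is not used anywhere in the proof of Theorem B, so nothing is lost by your route.
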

\begin{proof}
Let $k$ and $l$ be the smallest integers such that $\alpha_k \neq 0$ and $\beta_l \neq 0$ respectively.  First suppose that  $k \neq l$. Assume that  $k < l$. Then it is not difficult to see that the set
\[\{x_1, \ldots, x_{k-1}, g_1, x_{k+1}, \ldots, x_{l-1}, g_2,  x_{l+1}, \ldots, x_d\}\]
 minimally generates $G$. Now supose that $k = l$. If $\alpha_l = \beta_k$, then choose the next smallest $m$ such that $\alpha_m \neq \beta_m$. Let $t$ denote the largest integer $<m$ such that $\alpha_t = \beta_t$. Then it follows that if we replace $x_t$ by $g_1$ and $x_m$ by $g_2$ in the given generating set, we'll again get a minimal generating set. If $\alpha_l \neq \beta_k$, then the following three cases may occur:

(i) There exists an integer $s$, $k \le s \le d$, which is the smallest  such that  $\alpha_s = \beta_s \neq 0$.

(ii) There exists an integer $s$, $k \le s \le d$, which is the smallest such that $\alpha_s = 0$, $\beta_s \neq 0$ or $\alpha_s \neq 0$, $\beta_s = 0$.

(iii) For each $k \le s \le d$, $\alpha_s \neq 0$ if and only if  $\beta_s \neq 0$, and $\alpha_s \neq \beta_s$ if these are different from zero.

In the situation of case (i), we can get a required minimal generating set by replacing $x_{s-1}$ by $g_1$ and $x_s$ by $g_2$ in the given generating set. If case (ii) occurs, then choose the lagest $t < s$ such that $\alpha_t \neq 0$, $\beta_t \neq 0$. So by replacing $x_{t}$ by $g_1$ and $x_s$ by $g_2$ in the given generating set, we get a required minimal generating set.

Now assume the last case, i.e.,  for each $1 \le s \le d$, $\alpha_s \neq 0$ if and only if  $\beta_s \neq 0$, and $\alpha_s \neq \beta_s$ if these are different from zero. Let us re-write $g_1$ and $g_2$ by ommiting $x_i$'s for which $\alpha_i = \beta_i = 0$. So $g_1 = \Pi x_{e_i}^{\alpha_{e_i}}$ and $g_2 = \Pi x_{f_j}^{\beta_{f_j}}$, where $e_i$ and $f_j$ occur in the increasing order. In the rest of the proof, we confine all of our computations in ${\mathbb F}_p$, the field consisting of $p$ elements.

Suppose that there exists some positive integer $t$ such that $\alpha_{e_t}\beta_{e_{t+1}} \neq  \alpha_{e_{t+1}}\beta_{e_{t}}$.  Consider the set 
\[X := \{x_1, \ldots, x_{e_t-1}, g_1, x_{e_t+1}, \ldots, x_{e_{t+1}-1}, g_2,  x_{e_{t+1}+1}, \ldots, x_d\}.\]
 It is easy to see that $y_1 := x_{e_t}^{\alpha_{e_t}}x_{e_{t+1}}^{\alpha_{e_{t+1}}}$ and  $y_2 := x_{e_t}^{\beta_{e_t}}x_{e_{t+1}}^{\beta_{e_{t+1}}}$ can be produced by elementary cancelations in the set $X$. At this stage, we can have two possibilities, namely (a) $[x_{e_t}, x_{e_{t+1}}] \neq 1$ or (b) $[x_{e_t}, x_{e_{t+1}}] = 1$. First assume (a), i.e., $[x_{e_t}, x_{e_{t+1}}] \neq 1$.
Since $\alpha_{e_t}\beta_{e_{t+1}} \neq  \alpha_{e_{t+1}}\beta_{e_{t}}$, we claim that the elements $y_1$ and $y_2$ can not commute with each other. Indeed, if $[y_1, y_2] = 1$, then by a straight forwrd calculation it follows that 
\[[x_{e_t}, x_{e_{t+1}}]^{\alpha_{e_t}\beta_{e_{t+1}} - \alpha_{e_{t+1}}\beta_{e_{t}}} =1.\]
Since the exponent of $\gamma_2(G)$ is $p$, this is possible only when $\alpha_{e_t}\beta_{e_{t+1}} = \alpha_{e_{t+1}}\beta_{e_{t}}$, which contradicts our supposition.  Hence our claim follows.
Consider the subgroup $H = \gen{x_{e_t}, x_{e_{t+1}}}$. Since the nilpotency class of $G$ is $2$ and $[x_{e_t}, x_{e_{t+1}}] \neq 1$,  $H$ is a non-abelian group of class $2$. Obviously $y_1, y_2 \in H - \Phi(H)$, where $\Phi(H)$ denotes the Frattini subgroup of $H$. Since $[y_1, y_2] \neq 1$, it follows that $y_1$ and $y_2$ generate $H$. This proves that $X$ generates $G$ and therefore $X$ is a minimal generating set for $G$.  

Now assume (b), i.e., $[x_{e_t}, x_{e_{t+1}}] = 1$. Let $\gamma_{e_t}$ and $\delta_{e_t}$ be the multiplicative inverses of $\alpha_{e_t}$ and $\beta_{e_t}$ in ${\mathbb F}_p$ respectively. Notice that 
\[y_1^{\gamma_{e_t}}y_2^{-\delta_{e_t}} = x_{e_{t+1}}^{\alpha_{e_{t+1}}\gamma_{e_t} - \beta_{e_{t+1}}\delta_{e_t}}.\]
We claim that $\alpha_{e_{t+1}}\gamma_{e_t} - \beta_{e_{t+1}}\delta_{e_t}$ not equal to zero in ${\mathbb F}_p$. If possible, assume the contrary. Thus $\alpha_{e_{t+1}}\gamma_{e_t} = \beta_{e_{t+1}}\delta_{e_t}$. Multiplying both sides by $\alpha_{e_t}\beta_{e_t}$, we get 
$\alpha_{e_t}\beta_{e_{t+1}} =  \alpha_{e_{t+1}}\beta_{e_{t}}$, since $\alpha_{e_t} \gamma_{e_t} = 1 =\beta_{e_t} \delta_{e_t}$. This gives a contradiction to our supposition. Hence our claim is true. Now it is easy to see that $y_1$ and $y_2$ generate the abelian subgroup $\gen{x_{e_t}, x_{e_{t+1}}}$. Hence $X$ is a minimal generating set for $G$.  

Finally assume that $\alpha_{e_t}\beta_{e_{t+1}} =  \alpha_{e_{t+1}}\beta_{e_{t}}$ for all possible positive integers $t$. In this case, we are going to show that $[g_1, g_2] = 1$, which is not possible by the given hypothesis and therefore this case does not occur.   Let $t'$ and $t''$ be two arbitrary integers such that $1 \le t' < t'' \le d'$.  Then there exist  integers $t_1, \ldots, t_r$ (say) such that $t' = t_1 < \cdots < t_r = t''$.  We now have  the following system of equations.
\[\alpha_{e_{t_i}}\beta_{e_{t_{i+1}}} = \alpha_{e_{t_{i+1}}}\beta_{e_{t_i}}, ~~~~~~1 \le i \le r-1.\]
Solving this system of equations, we get  $\alpha_{e_{t_1}}\beta_{e_{t_r}} = \alpha_{e_{t_r}}\beta_{e_{t_1}}$. This shows that 
\[[x_{e_{t'}}^{\alpha_{e_{t'}}}, \; x_{e_{t''}}^{\beta_{e_{t''}}}] [x_{e_{t''}}^{\alpha_{e_{t''}}}, \; x_{e_{t'}}^{\beta_{e_{t'}}}] = [x_{e_{t'}}, x_{e_{t''}}]^{\alpha_{e_{t'}}\beta_{e_{t''}} - \alpha_{e_{t''}}\beta_{e_{t'}}} = 1.\]
Since $t'$ and $t''$ were arbitrary, it follows that $[g_1, g_2] = 1$. This completes the proof of the lemma.
 \hfill $\Box$

\end{proof}

\begin{lemma}\label{6lemma2}
Let $G$ be the $p$-group defined in \eqref{6eqn1}, where $p$ is an odd prime. Then $|\{x \in G \mid y_{ij} \in [x, G]\}| = (p^2-1)p^{r(r+1)/2}$.
\end{lemma}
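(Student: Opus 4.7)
The plan is to parametrize elements of $G$ by generator coordinates modulo the center and reduce the count to a linear-algebra question over $\mathbb{F}_p$. Since $G$ is special of exponent $p$ with $G/\Z(G)$ elementary abelian of rank $r+1$ (generated by the images of $x_1, \ldots, x_{r+1}$), every element of $G$ has a unique expression $x_\alpha \cdot z$, where $x_\alpha := x_1^{\alpha_1}\cdots x_{r+1}^{\alpha_{r+1}}$ with $\alpha = (\alpha_1, \ldots, \alpha_{r+1}) \in \mathbb{F}_p^{r+1}$ and $z \in \Z(G) = \gamma_2(G)$. Because $\gamma_2(G) \le \Z(G)$, we have $[x_\alpha z, g] = [x_\alpha, g]$ for every $g \in G$, so the set $[x_\alpha z, G]$ depends only on $\alpha$. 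The first step is therefore the reduction
\[
|\{x \in G \mid y_{ij} \in [x, G]\}| \;=\; |\Z(G)| \cdot |\{\alpha \in \mathbb{F}_p^{r+1} \mid y_{ij} \in [x_\alpha, G]\}|,
\]
with $|\Z(G)| = p^{r(r+1)/2}$ by Lemma \ref{6lemma0}.

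Next, I would expand the commutator explicitly: writing $g = x_\gamma \cdot z'$ and using bilinearity of $[-,-]$ in a class-$2$ group together with $[x_k, x_l] = y_{kl}$, I obtain
\[
[x_\alpha, g] \;=\; \prod_{1 \le k < l \le r+1} y_{kl}^{\,\alpha_k \gamma_l - \alpha_l \gamma_k}.
\]
The $y_{kl}$'s form an $\mathbb{F}_p$-basis of $\gamma_2(G)$, since there are $\binom{r+1}{2} = r(r+1)/2$ of them, matching $|\gamma_2(G)|$ from Lemma \ref{6lemma0}. Hence the equation $[x_\alpha, g] = y_{ij}$ is equivalent, over $\mathbb{F}_p$, to the linear system consisting of $\alpha_k \gamma_l - \alpha_l \gamma_k = 0$ for every $1 \le k < l \le r+1$ with $(k, l) \ne (i, j)$, together with $\alpha_i \gamma_j - \alpha_j \gamma_i = 1$. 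Counting admissible $\alpha$'s is now a linear-algebra problem.

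The last step is solving this system. If some $\alpha_m$ with $m \notin \{i, j\}$ is nonzero, then each equation involving index $m$ (none of which is the $(i,j)$-equation, since $m \notin \{i, j\}$) forces $\gamma_l = (\alpha_l/\alpha_m)\gamma_m$ for every $l \ne m$; substituting into $\alpha_i \gamma_j - \alpha_j \gamma_i$ yields $0$, contradicting the requirement that it equal $1$. Hence $\alpha \in \langle e_i, e_j\rangle$. Conversely, every nonzero $\alpha = \alpha_i e_i + \alpha_j e_j$ is admissible: choose $\gamma \in \langle e_i, e_j\rangle$ satisfying the single scalar equation $\alpha_i \gamma_j - \alpha_j \gamma_i = 1$, solvable because $(\alpha_i, \alpha_j) \ne (0,0)$, and then all other equations reduce to $0 = 0$ since $\alpha$ and $\gamma$ are supported on $\{i, j\}$. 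This yields exactly $p^2 - 1$ admissible tuples, and multiplying by $|\Z(G)| = p^{r(r+1)/2}$ delivers the claimed count $(p^2 - 1)p^{r(r+1)/2}$. I expect the main effort to lie in the bilinear commutator expansion (where class $2$ is essential); the remainder is elementary linear algebra, reflecting the underlying identification $\gamma_2(G) \cong \Lambda^2(G/\Z(G))$ under which each $y_{ij}$ is the decomposable wedge $e_i \wedge e_j$.
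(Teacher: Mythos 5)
Your proof is correct and follows essentially the same route as the paper: reduce modulo the centre, expand $[x_\alpha,x_\gamma]$ in terms of the independent generators $y_{kl}$ of $\gamma_2(G)$, and compare exponents to conclude that the admissible $x$ are exactly the non-central elements of $\langle x_i,x_j\rangle \Z(G)$. Your single linear-algebra formulation is a cleaner and more uniformly executed version of the paper's two-sided argument, which obtains the lower bound from the subgroup $Y_{ij}\gamma_2(G)$ and the upper bound by the same kind of coefficient comparison, with one case left partially sketched.
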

\begin{proof}
First we claim  that $|\{x \in G \mid y_{ij} \in [x, G]\}|$ is at least $(p^2-1)p^{r(r+1)/2}$. Consider the subgroup $Y_{ij}$ of $G$ generated by $x_i, x_j$. Notice that $Y_{ij}$ is a group of order $p^3$ and exponent $p$. So it follows that $y_{ij} \in [w, Y_{ij}]$ for all $w \in Y_{ij} - \gen{y_{ij}}$. Since the order of $\gamma_2(G) = \Z(G)$ is $p^{r(r+1)/2}$, it follows that  $|\{w \in Y_{ij}\gamma_2(G)  \mid y_{ij} \in [w, Y_{ij}\gamma_2(G)]\}| = (p^2-1)p^{r(r+1)/2}$. This proves our claim.

Now we show that $|\{x \in G \mid y_{ij} \in [x, G]\}|$ is not more than  $(p^2-1)p^{r(r+1)/2}$. Suppose that there exists a pair of elements $(g_1, g_2)$ in $G$ such that $y_{ij} = [g_1, g_2]$ and at least one of $g_1, g_2$ lie(s) outside $Y_{ij}\gamma_2(G)$. Let $g_1 = x_i^{\alpha_i} x_j^{\alpha_j} \Pi_{k \neq i, j} x_k^{\alpha_k}u_1$ and $g_2 = x_i^{\beta_i}x_j^{\beta_j} \Pi_{l \neq i, j}x_l^{\alpha_l}u_2$ for some non-negative integers $\alpha_i$, $\alpha_j$, $\beta_i$, $\beta_j$, $\alpha_k$, $\beta_l$, and some $u_1, u_2 \in \gamma_2(G)$.  Suppose that $g_1$ lies outside $Y_{ij}\gamma_2(G)$, and $x_k^{\alpha_k}$ and $x_k^{\beta_k}$ appears in $g_1$ and $g_2$ respectively with $\alpha_k \neq 0$. First suppose that $\beta_k = 0$. Consider the commutator $[x_i^{\alpha_i} x_j^{\alpha_j}x_k^{\alpha_k}, x_i^{\beta_i}x_j^{\beta_j}]$, which after expanding becomes $[x_i, x_j]^{\alpha_i\beta_j - \alpha_j\beta_i}[x_k, x_i]^{\alpha_k\beta_i}[x_k, x_j]^{\alpha_k\beta_j}$. Notice that the preceding expression is a part of $[g_1, g_2] = y_{ij} = [x_i, x_j]$. Since $\gamma_2(G)$ is independently generated by all $y_{rs}$, comparing powers of corresponding $y_{rs}$ we get
\[\alpha_i\beta_j - \alpha_j\beta_i = 1, ~~~~~~~\alpha_k\beta_i = 0, ~~~~~~~\alpha_k\beta_j = 0.\]
Since $\alpha_k \neq 0$ and we are in a field, it follows that both $\beta_i$ and $\beta_j$ are $0$. Hence $0 = \alpha_i\beta_j - \alpha_j\beta_i = 1$, which is not possible. This shows that $\beta_k$ can not be $0$ whenever $\alpha_k \neq 0$.  

Now assume that both $\alpha_k$ as well as $\beta_k$ are non zero. It is not difficult to  get similar kind of contradiction by considering the commutator $[x_i^{\alpha_i} x_j^{\alpha_j}x_k^{\alpha_k}, x_i^{\beta_i}x_j^{\beta_j}x_k^{\beta_k}]$ and comparing corresponding powers of $y_{rs}$. This shows that no pair $(g_1, g_2)$ in $G$ lying outside $Y_{ij}\gamma_2(G)$ can give $y_{ij} = [g_1, g_2]$. This completes the proof of the lemma. \hfill $\Box$

\end{proof}

Now we are ready to prove Theorem B.

\vspace{.1in}

\noindent {\it Proof of Theorem B.} Let $G$ be the group defined in \eqref{6eqn1}. Let $1 \neq g \in K(G)$. Since the nilpotency class of $G$ is $2$ and the exponent of $G$ is $p$ (notice that we only need the fact that the exponent of $G/\Z(G)$ is $p$),  there exist $g_1, g_2 \in G$ such that  $g_1 = \Pi_{i=1}^d x_i^{\alpha_i}$,  $g_2 = \Pi_{j=1}^d x_j^{\beta_j}$ and $1 \neq g = [g_1, g_2]$,  where  $\alpha_i$ and $\beta_j$ are some non negative integers between $0$ and $p-1$. By Lemma \ref{6lemma1}, we can  find a minimal generating set $\{w_1, \ldots, w_d\}$ for $G$ which includes both $g_1$ and $g_2$. Notice that  $w_i^p = 1$ for $1 \le i \le d$ and $\gamma_2(G)$ is minimally generated by  $[w_i, w_j]$, where $[w_i, w_j]$ are central elements of order $p$ for $1 \le i < j \le d$. Since $G$ has only two different conjugacy class sizes $1$ and $p^r$ (Lemma \ref{6lemma0}), we have $|x^G| = b(G) = p^r$ for all $x \in G - \Z(G)$. Thus by Lemma \ref{6lemma2}, it follows that $|\{x \in G \mid g \in [x, G]\}| = (p^2-1)p^{r(r+1)/2}$. Hence 
\[\Pr_g(G) = \frac{1}{|G|p^r}(p^2-1)p^{r(r+1)/2}= \frac{p^2 - 1}{p^{2r+1}},\]
since $|G| = p^{(r+1)(r+2)/2}$. Obviously $\Pr_g(G)$ is independent of $g$ and therefore $|P(G)| = 1$.

Notice that for $r = 1$, $G$ is a non-abelian group of order $p^3$. Thus it is an extraspecial $p$-group and therefore a Camina group. But if $r \ge 2$, then $|\gamma_2(G)| =  p^{r(r+1)/2} > p^r = b(G)$. This shows that $G$ is not a Camina group, because in a finite Camina $G$, $|\gamma_2(G)| = b(G)$. This  completes the proof. \hfill $\Box$

\vspace{.2in}

The following result follows  from \cite[Corollary 2.2]{kI02}.
\begin{lemma}\label{6lemma3}
Let $G$ be a finite $p$-group of nilpotency class $2$ having only two different conjugacy class sizes. Then both $G/\Z(G)$ as well as $\gamma_2(G)$ are elementary groups.
\end{lemma}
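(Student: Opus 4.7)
The plan is to prove the two claims separately, deriving the lemma directly from the two-class-size hypothesis (and Theorem \ref{3thm2}, which tells us the two sizes are $1$ and $p^r$ for some $r\ge 1$) together with the standard commutator identities available in class-$2$ groups. Throughout, since $\gamma_2(G)\le \Z(G)$, the commutator map $\phi_x:G\to \gamma_2(G)$, $\phi_x(g)=[x,g]$, is a homomorphism with kernel $C_G(x)$; in particular $[x,G]$ is a subgroup of $\gamma_2(G)$ of order $|x^G|$, and $[x^n,g]=[x,g]^n$ for all integers $n$.

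The main step is to show $G^p\le \Z(G)$, which gives that $G/\Z(G)$ is elementary abelian. Fix $x\in G\setminus\Z(G)$, so $[x,G]$ is a subgroup of $\gamma_2(G)$ of order $p^r$. Using $[x^p,g]=[x,g]^p$, the subgroup $[x^p,G]$ equals the image $([x,G])^p$ of $[x,G]$ under the $p$-power endomorphism of the abelian group $[x,G]$. Because $[x,G]$ is a non-trivial finite abelian $p$-group, its $p$-torsion is non-trivial, whence
\[
|[x^p,G]|\;=\;|([x,G])^p|\;\le\;p^{r-1}\;<\;p^r.
\]
Therefore $|(x^p)^G|=|[x^p,G]|<p^r$, and the two-class-size hypothesis forces $|(x^p)^G|=1$, i.e. $x^p\in\Z(G)$. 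Since this holds for every $x\in G\setminus\Z(G)$ (and trivially for $x\in\Z(G)$), we get $G^p\le\Z(G)$, so $G/\Z(G)$ is elementary abelian.

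For the second assertion, any element of $\gamma_2(G)$ is a product of commutators. Given $[x,y]\in\gamma_2(G)$ we may assume $x\notin\Z(G)$ (else $[x,y]=1$); the previous step gives $x^p\in\Z(G)$, and hence $[x,y]^p=[x^p,y]=1$. Since $\gamma_2(G)$ is central (hence abelian) and generated by elements of order dividing $p$, it is elementary abelian.

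The only delicate point in the plan is the identification $[x^p,G]=([x,G])^p$ and the conclusion that this subgroup is proper in $[x,G]$; both rely crucially on $[x,G]$ being a non-trivial finite $p$-group, which is why we insisted on choosing $x$ outside $\Z(G)$ and why the argument is specific to the class-$2$, two-class-size setting. This keeps the proof elementary and avoids appealing to the Lie-theoretic machinery of Vaughan-Lee used in \cite{kI02}.
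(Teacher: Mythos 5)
Your proof is correct. Note, however, that the paper does not prove this lemma at all: it is stated as an immediate consequence of Ishikawa's result (\cite[Corollary 2.2]{kI02}), so there is no internal argument to compare against. What you supply is a self-contained, elementary replacement for that citation, and every step checks out: in a class-$2$ group $[x,G]$ is indeed a subgroup of $\gamma_2(G)\le\Z(G)$ of order $|x^G|$, the identity $[x^p,g]=[x,g]^p$ gives $[x^p,G]=([x,G])^p$, and since the $p$-power endomorphism of the non-trivial finite abelian $p$-group $[x,G]$ has non-trivial kernel (Cauchy), $|(x^p)^G|\le p^{r-1}<p^r$, which by the two-class-size hypothesis forces $x^p\in\Z(G)$. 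The deduction that $\gamma_2(G)$ has exponent $p$ from $[x,y]^p=[x^p,y]=1$ is likewise fine, as $\gamma_2(G)$ is central and hence abelian. Your argument buys a short, citation-free proof of exactly the statement needed in Theorem \ref{6prop4}, at the cost of nothing: it uses only the standard class-$2$ commutator calculus already implicit elsewhere in the paper (e.g.\ in the proof of Lemma \ref{6lemma1}), whereas the paper leans on an external reference. One small stylistic remark: you invoke Theorem \ref{3thm2} to get that the two class sizes are $1$ and $p^r$, but since $G$ is already assumed to be a $p$-group this is automatic and the reference can be dropped.
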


In the following result, we show that $P(H) = 1$ for all $p$-groups $H$  of class $2$ having only two different comjugacy class sizes and having the same commutator  structure as of  the group $G$ defined in \eqref{6eqn1} .
 
\begin{thm}\label{6prop4}
Let  $H$ be a finite $p$-groups of nilpotency class $2$ having only two different conjugacy class sizes. Further, let $H$ be minimally generated by  $\{w_1, w_2, \ldots, w_d\}$ such that $|\gamma_2(H)| = p^{d(d-1)/2}$.  Then $P(H) = 1$.
\end{thm}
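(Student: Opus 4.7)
The plan is to establish an isoclinism from $H$ onto the Ito group $G$ of \eqref{6eqn1} with parameter $r = d-1$, and then invoke Theorem \ref{2thm1} together with Theorem B.

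I begin with a rigidity step that pins down the structure of $H$. By Lemma \ref{6lemma3}, $V := H/\Z(H)$ and $W := \gamma_2(H)$ are elementary abelian $p$-groups, which I regard as $\mathbb{F}_p$-vector spaces. Since $H$ has class $2$ and $V$ has exponent $p$, one has $H^p \le \Z(H)$, whence $\Phi(H) = H^p \gamma_2(H) \le \Z(H)$; this gives $\dim_{\mathbb{F}_p} V \le \dim_{\mathbb{F}_p} H/\Phi(H) = d$. Conversely, commutation in $H$ descends (thanks to class $2$ together with $V$, $W$ being elementary abelian) to an alternating $\mathbb{F}_p$-bilinear map $a_H \colon V \times V \to W$ whose image generates $W$, and which therefore factors through a surjection $\bigwedge^2 V \twoheadrightarrow W$. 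Comparing dimensions gives $d(d-1)/2 = \dim W \le \binom{\dim V}{2}$, forcing $\dim V \ge d$. Hence $\dim V = d$, $\Phi(H) = \Z(H)$, and the $\binom{d}{2}$ commutators $\{[w_i, w_j]\}_{1 \le i < j \le d}$ form an $\mathbb{F}_p$-basis of $W$.

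Next I build the isoclinism. Let $G = \langle x_1, \dots, x_d \rangle$ be the group of \eqref{6eqn1} with $r = d-1$; by Lemma \ref{6lemma0} (and the same rigidity argument applied to $G$), $\{x_i \Z(G)\}$ and $\{y_{ij}\}_{i < j}$ are $\mathbb{F}_p$-bases of $G/\Z(G)$ and $\gamma_2(G)$ respectively. I define the $\mathbb{F}_p$-linear isomorphisms $\alpha \colon V \to G/\Z(G)$ and $\beta \colon W \to \gamma_2(G)$ on bases by $\alpha(w_i \Z(H)) = x_i \Z(G)$ and $\beta([w_i, w_j]) = y_{ij}$. Since $a_H$ and $a_G$ are both $\mathbb{F}_p$-bilinear, commutativity of diagram \eqref{eqn1} on the generating pairs $(w_i \Z(H), w_j \Z(H))$ --- which holds by the definition of $\beta$ --- propagates to all of $V \times V$. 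Thus $(\alpha, \beta)$ is an isoclinism from $H$ onto $G$.

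Finally, by Theorem \ref{2thm1}, $\Pr_g(H) = \Pr_{\beta(g)}(G)$ for every $g \in W$; since $\beta$ intertwines the two commutator maps it restricts to a bijection $K(H) \to K(G)$. Applying Theorem B with $r = d-1$ yields
\[
\Pr_g(H) = \frac{p^2 - 1}{p^{2d-1}}
\]
for every $1 \ne g \in K(H)$, and in particular $|P(H)| = 1$. The main obstacle is the rigidity step: one must squeeze $\dim V$ between $d$ (from $\Phi(H) \le \Z(H)$) and $d$ (from the surjection $\bigwedge^2 V \twoheadrightarrow W$) in order to force the exterior-square structure on $\gamma_2(H)$. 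Once that is in place, producing the isoclinism with the Ito group and quoting the already established machinery is routine bookkeeping.
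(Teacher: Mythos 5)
Your proof is correct and follows essentially the same route as the paper: establish that $\Z(H)=\Phi(H)$ with $H/\Z(H)$ and $\gamma_2(H)$ elementary abelian, construct an explicit isoclinism onto the Ito group \eqref{6eqn1} with $r=d-1$ by sending $w_i\Z(H)\mapsto x_i\Z(G)$ and $[w_i,w_j]\mapsto y_{ij}$, and then invoke Theorem \ref{2thm1} and Theorem B. Your exterior-square dimension count (squeezing $\dim H/\Z(H)$ to equal $d$ and forcing the $\binom{d}{2}$ commutators $[w_i,w_j]$ to be linearly independent) is a cleaner justification of the facts the paper asserts more tersely, but it is the same argument in substance.
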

\begin{proof}
Notice that $\Z(H) = \Phi(H)$. For, it follows from the preceding lemma that the exponent of $H/\Z(H)$ is $p$. Thus $H^p \le \Z(H)$.  Since $H$ is minimally generated by $d$ elements and $|\gamma_2(H)| = p^{d(d-1)/2}$, no central element can lie in $H - \Phi(H)$. Now using the fact that $\gamma_2(H) \le \Z(H)$, it follows that  $\Z(H) = \Phi(H)$.  By the preceding lemma it follows that $\gamma_2(H)$ is elementary abelian $p$-groups.   Let $G$ be the group defined in \eqref{6eqn1} with $r = d-1$. Notice that $G/\Z(G)$ as well as $\gamma_2(G)$ are elementary abelian and $\Z(G) = \Phi(G)$ (this may be observed directly from the presentation of the group or by using Lemma \ref{6lemma3} and the above information as $G$ satisfies the conditions of  Lemma \ref{6lemma3}). By the given hypothesis $|G/\Z(G)| = |H/\Z(H)|$ and $|\gamma_2(G)| =  |\gamma_2(H)|$.  Thus  $G/\Z(G) \cong H/\Z(H)$ and $\gamma_2(G) \cong  \gamma_2(H)$. Since   $|\gamma_2(H)| = p^{d(d-1)/2}$, $[w_i, w_j] \neq 1$ for all $1 \le i < j \le d$. Set  $[w_i, w_j] = z_{ij}$. Notice that  the map $\alpha : G/\Z(G) \to H/\Z(H)$ defined on the set of generators by $\alpha(x_i) = w_i$ gives an isomorphism of   $G/\Z(G)$ onto $H/\Z(H)$. Similarly the map  $\beta : \gamma_2(G) \to \gamma_2(H)$ defined on the set of generators by $\beta(y_{ij}) = z_{ij}$ 
gives an isomorphism of   $\gamma_2(G)$ onto $\gamma_2(H)$.  It is not difficult to show that diagram \eqref{eqn1} commutes in the present setup. Thus it follows that $G$ and $H$ are isoclinic. Hence by Theorem \ref{2thm1} and Theorem B we have  $P(H) = P(G) = 1$.      \hfill $\Box$

\end{proof}


\section{Some more examples and  bounds for ${\Pr}_g(G)$}

As promised in the introduction, we now show the existence of a finite group $G$ of class $3$ such that  $G$ has only two different conjugacy class sizes and $|P(G)| > 1$. Consider the following group for an odd prime $p$.
\begin{equation}\label{70eqn1}
G = \gen{x_1, x_2 \mid [x_1, x_2] = y, [x_1, y] = z_1, [x_2, y] = z_2, x_i^p = y^p = z_i^p = 1 (i = 1, 2)}.
\end{equation}
That $G$ has only two conjugacy class sizes $1$ and $p^2$,  follows from \cite[Theorem 4.2]{kI99}. It is easy to see that the nilpotency class of $G$ is three, $|\gamma_2(G)| = p^3$,  $|\Z(G)| = p^2$ and $|G| = p^5$. 

Let $g_1$ and $g_2$ be two elements of $G$ modulo $\Z(G)$. Then $g_1 = x_1^{\alpha_1}x_2^{\beta_1}y^{\gamma_1}$ and $g_2 = x_1^{\alpha_2}x_2^{\beta_2}y^{\gamma_2}$ for $0 \le \alpha_i, \beta_i, \gamma_i \le p-1$, where $i = 1, 2$. We are now going to calculate $[g_1, g_2]$. 
\begin{eqnarray}\label{70eqn2}
[g_1, g_2] &=& [x_1^{\alpha_1}x_2^{\beta_1}y^{\gamma_1}, x_1^{\alpha_2}x_2^{\beta_2}y^{\gamma_2}] = [x_1^{\alpha_1}x_2^{\beta_1}, x_1^{\alpha_2}x_2^{\beta_2}] z_1^{\alpha_1\gamma_2-\alpha_2\gamma_1} z_2^{\beta_1\gamma_2 - \beta_2\gamma_1}\\
&=& [x_2^{\beta_1}, x_1^{\alpha_2}][[x_2^{\beta_1}, x_1^{\alpha_2}], x_2^{\beta_2}]   [x_1^{\alpha_1}, x_2^{\beta_2}][[x_1^{\alpha_1}, x_2^{\beta_2}], x_2^{\beta_1}] z_1^{\alpha_1\gamma_2-\alpha_2\gamma_1} z_2^{\beta_1\gamma_2 - \beta_2\gamma_1}.\nonumber
\end{eqnarray}

It is not difficult to show that
\begin{eqnarray*}
[x_1^{\alpha_2}, x_2^{\beta_1}] =  y^{\alpha_2\beta_1} z_1^{-\beta_1\alpha_2(\alpha_2-1)/2} z_2^{-\alpha_2\beta_1(\beta_1-1)/2}.
\end{eqnarray*}
and 
\begin{eqnarray*}
[x_1^{\alpha_1}, x_2^{\beta_2}] =  y^{\alpha_1\beta_2} z_1^{-\beta_2\alpha_1(\alpha_1-1)/2} z_2^{-\alpha_1\beta_2(\beta_2-1)/2}.
\end{eqnarray*}

Putting these values in \eqref{70eqn2}, we get
\begin{eqnarray}\label{70eqn3}
[g_1, g_2] &=& y^{\alpha_1\beta_2 - \alpha_2 \beta_1} z_1^{ \beta_1\alpha_2(\alpha_2-1)/2 - \beta_2\alpha_1(\alpha_1-1)/2 + \alpha_1\gamma_2-\alpha_2\gamma_1}\\
& &  z_2^{\alpha_2\beta_1(\beta_1-1)/2 - \alpha_1\beta_2(\beta_2-1)/2 - \alpha_1\beta_1\beta_2 + \alpha_2\beta_1\beta_2 + \beta_1\gamma_2 - \beta_2\gamma_1}.\nonumber
\end{eqnarray}

\begin{lemma}
Let $G$ be the group as defined in \eqref{70eqn1}. Then $\Pr_y(G) = \frac{p^2-1}{p^4}$
\end{lemma}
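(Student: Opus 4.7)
The plan is to apply equation \eqref{70eqn3} directly: the condition $[g_1, g_2] = y$ is equivalent to three simultaneous equations over $\mathbb{F}_p$ in the six unknowns $\alpha_1, \beta_1, \gamma_1, \alpha_2, \beta_2, \gamma_2$, namely
\[
\alpha_1\beta_2 - \alpha_2\beta_1 = 1
\]
(the $y$-exponent) together with the vanishing of the $z_1$- and $z_2$-exponents in \eqref{70eqn3}. The first equation identifies the four-tuple $(\alpha_1, \beta_1, \alpha_2, \beta_2)$ with an element of $\operatorname{SL}_2(\mathbb{F}_p)$; there are $|\operatorname{SL}_2(\mathbb{F}_p)| = p(p^2-1)$ such tuples.

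The decisive remark is that the remaining two conditions, viewed as a system in $(\gamma_1, \gamma_2)$ with the other four entries frozen as parameters, have coefficient matrix
\[
\begin{pmatrix} -\alpha_2 & \alpha_1 \\ -\beta_2 & \beta_1 \end{pmatrix},
\]
whose determinant is exactly $\alpha_1\beta_2 - \alpha_2\beta_1$. When the first equation holds this determinant equals $1$, so the matrix is invertible and $(\gamma_1, \gamma_2)$ is determined uniquely in terms of $(\alpha_1, \beta_1, \alpha_2, \beta_2)$. This collapses the six-variable count to a count of $\operatorname{SL}_2$-tuples.

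From this one would recover ${\Pr}_y(G)$ by the usual bookkeeping: each six-tuple corresponds to a pair of cosets in $(G/\Z(G))^2$, and since the commutator is constant on $\Z(G)$-cosets, each such pair lifts to $|\Z(G)|^2$ ordered pairs in $G \times G$ with $[g_1, g_2] = y$; dividing by $|G|^2$ then produces the claimed probability. Equivalently, one could apply Lemma \ref{4lemma1}, which in this setting reads ${\Pr}_y(G) = |\{x \in G : y \in [x, G]\}|/(|G|\cdot p^2)$ (because the only nontrivial class size is $p^2$ and $y \neq 1$ forces $x \notin \Z(G)$); the same linear system and the same invertibility argument describe this set, now with $(\alpha_1, \beta_1, \gamma_1)$ fixed and $(\alpha_2, \beta_2, \gamma_2)$ varying.

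The only substantive technical point I expect to encounter is the recognition that the determinant of the $(\gamma_1, \gamma_2)$-coefficient matrix is literally the left-hand side of the $y$-exponent equation, so that once the $y$-exponent condition is imposed the remaining linear algebra trivialises. Before that observation the system looks like three coupled equations in six variables with nontrivial quadratic corrections in the $\alpha_i$ and $\beta_i$ coming from the $\gamma_3(G)$-part of \eqref{70eqn3}; after it, everything else is bookkeeping in terms of $|\Z(G)| = p^2$ and $|G| = p^5$.
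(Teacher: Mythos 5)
Your two structural observations are correct and they do make the computation essentially mechanical: the $y$-exponent condition \eqref{70eqn4} puts $(\alpha_1,\beta_1,\alpha_2,\beta_2)$ in bijection with $\operatorname{SL}_2(\mathbb{F}_p)$, and the coefficient matrix of \eqref{70eqn5}--\eqref{70eqn6} in $(\gamma_1,\gamma_2)$ has determinant $\alpha_1\beta_2-\alpha_2\beta_1=1$, so $(\gamma_1,\gamma_2)$ is then uniquely determined. The gap is in the one step you do not carry out, the final ``bookkeeping'': your count yields exactly $|\operatorname{SL}_2(\mathbb{F}_p)|=p(p^2-1)$ solution six-tuples, i.e.\ $p(p^2-1)$ coset pairs in $(G/\Z(G))^2$, hence $p(p^2-1)\cdot|\Z(G)|^2=p^5(p^2-1)$ ordered pairs in $G\times G$ with $[g_1,g_2]=y$, and therefore
\[
\Pr_y(G)=\frac{p^5(p^2-1)}{p^{10}}=\frac{p^2-1}{p^5},
\]
which is \emph{not} the claimed $\frac{p^2-1}{p^4}$. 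In the Lemma \ref{4lemma1} formulation the same count shows $|\{x\in G\mid y\in[x,G]\}|=p(p^2-1)\cdot p^2/p=p^4-p^2$ (for each admissible coset $\bar g_1$ the partners form a coset of $C_G(g_1)/\Z(G)$, of size $p$), not $p^5-p^3$. So your argument, completed honestly, refutes the stated lemma rather than proving it.

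The discrepancy is not an error of yours. The paper's own proof fixes $(\alpha_1,\beta_1,\gamma_1)$ and eliminates variables to conclude that \emph{every} $g_1\in G-\gamma_2(G)$ admits a partner $g_2$ with $[g_1,g_2]=y$; that elimination is faulty. Concretely, take $g_1=x_1y$, so $(\alpha_1,\beta_1,\gamma_1)=(1,0,1)$: then \eqref{70eqn4} forces $\beta_2=1$, and the left-hand side of \eqref{70eqn6} collapses to $-\beta_2(\beta_2-1)/2-\beta_2\gamma_1=-1\neq 0$, so $y\notin[x_1y,G]$. (A direct check agrees: $[x_1y,\,x_1^{a}x_2^{b}y^{c}]=y^{b}z_1^{c-a}z_2^{-b(b+1)/2}$, whose $z_2$-exponent is $-1$ whenever $b=1$.) The correct elimination shows that for fixed $(\alpha_1,\beta_1)$ the system forces a single value of $\gamma_1$ and then leaves $\alpha_2$ free, rather than determining $\alpha_2$ in terms of $\gamma_1$ as the paper asserts. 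Your value also passes the mass check: since $\Pr(G)=(p^3+p^2-1)/p^5$, the remaining probability $(p^2-1)(p^3-1)/p^5$ is exactly $(p^3-1)$ copies of $(p^2-1)/p^5$, one for each nontrivial element of $\gamma_2(G)$, whereas the paper's values for $y$ and $z_1$ cannot sum to $1$. The upshot is that your method gives $\Pr_y(G)=(p^2-1)/p^5=\Pr_{z_1}(G)$, which would make $|P(G)|=1$ for this group and therefore also invalidates the theorem the paper deduces from this lemma.
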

\begin{proof}
Let $g_1 = x_1^{\alpha_1}x_2^{\beta_1}y^{\gamma_1}$ and $g_2 = x_1^{\alpha_2}x_2^{\beta_2}y^{\gamma_2}$ be two arbitray elements of $G$ modulo the center such that $[g_1, g_2] = y = [x_1, x_2]$.  Using \eqref{70eqn3} and comparing powers of $y$, $z_1$ and $z_2$, we get the following system of equations:
\begin{eqnarray}
\label{70eqn4}&&\alpha_1\beta_2 - \alpha_2 \beta_1 = 1\\
\label{70eqn5}&&\beta_1\alpha_2(\alpha_2-1)/2 - \beta_2\alpha_1(\alpha_1-1)/2 + \alpha_1\gamma_2-\alpha_2\gamma_1 = 0\\
\label{70eqn6}&&\alpha_2\beta_1(\beta_1-1)/2 - \alpha_1\beta_2(\beta_2-1)/2 - \alpha_1\beta_1\beta_2 + \alpha_2\beta_1\beta_2 + \beta_1\gamma_2 - \beta_2\gamma_1 = 0
\end{eqnarray}

It follows from \eqref{70eqn4} that both of $\alpha_1$ and $\beta_1$ can not be zero.
First assume that none of $\alpha_1, \beta_1$ is zero.  Then substituting  the value of $\beta_2$ from \eqref{70eqn4} in \eqref{70eqn5} and \eqref{70eqn6}, and cancelling out $\gamma_2$ using the two new equations, we get
\[\alpha_2 = \frac{\beta_1^2\alpha_1 + \beta_1\alpha_1 - \beta_1 + \alpha_1 - 2\gamma_1 -1}{\beta_1^2}.\]
Now we can find $\beta_2$ and $\gamma_2$ using \eqref{70eqn4} and  \eqref{70eqn5}.

Now assume that  $\alpha_1 = 0$, $\beta_1 \neq 0$  (or $\beta_1 = 0$, $\alpha_1 \neq 0$). Then, using \eqref{70eqn4}-\eqref{70eqn6}, it is not difficult to find $\alpha_2, \beta_2$ and $\gamma_2$ in terms of $\beta_1$ ( or $\alpha_1$) and $\gamma_1$.  Thus it follows that given any element $g_1 = x_1^{\alpha_1}x_2^{\beta_1}y^{\gamma_1}z \in G - \gamma_2(G)$, where $z \in \Z(G)$, there exists an element $g_2 = x_1^{\alpha_2}x_2^{\beta_2}y^{\gamma_2} \in G$ such that $[g_1, g_2] = y$. Thus $|\{g \in G \mid y \in [g, G]\}| = p^5 - p^3$. Hence by Lemma \ref{4lemma1}, $\Pr_{y}(G) = \frac{1}{|G|b(G)}(p^5 - p^3) = \frac{p^2-1}{p^4}$, which is the required value for $\Pr_{y}(G)$. \hfill $\Box$

\end{proof}

\begin{lemma}
Let $G$ be the group as defined in \eqref{70eqn1}. Then $\Pr_{z_1}(G)  = \frac{p^2-1}{p^5}$.
\end{lemma}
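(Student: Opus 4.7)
The plan is to apply Lemma \ref{4lemma1} together with the explicit commutator formula \eqref{70eqn3} already established in the excerpt. Since $G$ has only two conjugacy class sizes $1$ and $p^2$, every non-central element $x \in G - \Z(G)$ satisfies $|x^G| = p^2$, while any central $x$ has $[x, G] = \{1\}$ and therefore does not contribute to the sum. Hence Lemma \ref{4lemma1} reduces to
\[
\Pr_{z_1}(G) = \frac{1}{|G| \cdot p^2}\bigl|\{x \in G : z_1 \in [x, G]\}\bigr| = \frac{N}{p^7},
\]
and the task is to show $N = p^2(p^2-1)$.

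To count $N$, I would write each $g_1, g_2 \in G$ modulo $\Z(G) = \langle z_1, z_2\rangle$ in the standard form $x_1^{\alpha_i} x_2^{\beta_i} y^{\gamma_i}$ and demand $[g_1, g_2] = z_1$. By \eqref{70eqn3}, this forces the system
\[
\alpha_1\beta_2 - \alpha_2\beta_1 = 0,
\]
together with the exponent of $z_1$ being $1$ and the exponent of $z_2$ being $0$. The first equation says $(\alpha_2, \beta_2)$ is an $\mathbb{F}_p$-multiple of $(\alpha_1, \beta_1)$ (when the latter is non-zero), which I would exploit by substituting $(\alpha_2, \beta_2) = \lambda(\alpha_1, \beta_1)$ into the other two equations.

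The remainder is a case analysis on $(\alpha_1, \beta_1)$, which I expect to be the main (but purely computational) obstacle. In the case $\alpha_1 = \beta_1 = 0$ (so $g_1 \in \langle y\rangle\Z(G)$), the $z_1$-equation becomes $\alpha_2\gamma_1 = -1$, solvable iff $\gamma_1 \neq 0$, and then $\beta_2 = 0$ is forced while $\gamma_2$ is free; this contributes $(p-1)\cdot p^2$ elements $g_1$. In the case $\alpha_1 \neq 0$, $\beta_1 = 0$, the $z_2$-equation is automatic and the $z_1$-equation $\alpha_1\gamma_2 - \alpha_2\gamma_1 = 1$ is always solvable, contributing $(p-1)\cdot p \cdot p^2$ elements. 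In the case $\alpha_1 = 0$, $\beta_1 \neq 0$ the $z_1$-equation becomes $0 = 1$, ruling out any solution. Finally, when both $\alpha_1, \beta_1 \neq 0$, substituting the parametrization and eliminating $\gamma_2 - \lambda\gamma_1$ between the $z_1$- and $z_2$-equations yields $0 = 1$, again ruling out solutions.

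Summing the two contributing cases gives $N = p^2(p-1) + p^3(p-1) = p^2(p^2-1)$, and therefore $\Pr_{z_1}(G) = p^2(p^2-1)/p^7 = (p^2-1)/p^5$ as required. The key points to verify carefully will be the cancellations in the $z_1$- and $z_2$-coefficients after substituting $(\alpha_2,\beta_2) = \lambda(\alpha_1,\beta_1)$ (where the symmetric structure causes the $\lambda(\lambda-1)$ terms to collapse), and keeping track of the factor $|\Z(G)| = p^2$ when lifting from $G/\Z(G)$ back to $G$.
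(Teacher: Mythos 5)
Your proposal is correct and takes essentially the same route as the paper: it reduces via Lemma \ref{4lemma1} to counting $\{x \in G : z_1 \in [x,G]\}$, imposes the three exponent equations coming from \eqref{70eqn3}, rules out every $g_1$ with $\beta_1 \neq 0$ by a case analysis, and arrives at the count $p^4 - p^2 = p^2(p^2-1)$ and hence $(p^2-1)/p^5$. The only cosmetic difference is that you eliminate via the parametrization $(\alpha_2,\beta_2) = \lambda(\alpha_1,\beta_1)$ and reach the contradiction $0=1$, whereas the paper substitutes $\beta_2$ from the first equation and reaches $2\beta_1 = 0$; both are valid.
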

\begin{proof}
Let $g_1 = x_1^{\alpha_1}x_2^{\beta_1}y^{\gamma_1}$ and $g_2 = x_1^{\alpha_2}x_2^{\beta_2}y^{\gamma_2}$ be two arbitray non-trivial elements of $G$ modulo the center such that $[g_1, g_2] = z_1 = [x_1, y]$.  Using \eqref{70eqn3} and comparing powers of $y$, $z_1$ and $z_2$, we get the following system of equations:
\begin{eqnarray}
\label{70eqn7}&&\alpha_1\beta_2 - \alpha_2 \beta_1 = 0\\
\label{70eqn8}&&\beta_1\alpha_2(\alpha_2-1)/2 - \beta_2\alpha_1(\alpha_1-1)/2 + \alpha_1\gamma_2-\alpha_2\gamma_1 = 1\\
\label{70eqn9}&&\alpha_2\beta_1(\beta_1-1)/2 - \alpha_1\beta_2(\beta_2-1)/2 - \alpha_1\beta_1\beta_2 + \alpha_2\beta_1\beta_2 + \beta_1\gamma_2 - \beta_2\gamma_1 = 0
\end{eqnarray}

We claim that  $\beta_1 = 0$. Suppose for a moment that our cliam is true.  If $\alpha_1 \neq 0$,  then it follows from \eqref{70eqn7} that $\beta_2 = 0$. Substituting $\beta_1 = 0 = \beta_2$ in \eqref{70eqn8} gives 
\begin{equation}\label{70eqn10}
\alpha_1 \gamma_2 - \alpha_2 \gamma_1 = 1.
\end{equation}
 Notice that $\beta_1 = 0$ and $\beta_2 = 0$ satisfy \eqref{70eqn9}. So if we take any  element $g_1 = x_1^{\alpha_1}y^{\gamma_1}$ with $\alpha_1 \neq 0$, then there exists $g_2 \in G$ such that $[g_1, g_2] = z_1$, where $g_2 = x_1^{\alpha_2}y^{\gamma_2}$ in which  $0 \le \alpha_2, \gamma_2 \le p-1$ satisfy \eqref{70eqn10}. If $\alpha_1 = 0$, then $\gamma_1 \neq 0$ as $g_1$ is a non-trivial element. Substituting $\alpha_1 = \beta_1 = 0$ in \eqref{70eqn8} and \eqref{70eqn9}, we respectively get
\[\gamma_1 \alpha_2 = -1 \quad\quad~~~~~~~~~~\text{and} \quad\quad~~~~~~~~~~~\gamma_1 \beta_2 = 0.\]

Since $\gamma_1 \neq 0$, we have $\beta_2 = 0$.   So if we take any  element $g_1 = y^{\gamma_1}$ with $\gamma_1 \neq 0$, then there exists $g_2 \in G$ such that $[g_1, g_2] = z_1$, where $g_2 = x_1^{\alpha_2}$ in which  $0 \le \alpha_2 \le p-1$ satisfies $\gamma_1 \alpha_2 = -1$. Hence it follows that for any non-central element $g_1 = x_1^{\alpha_1}y^{\gamma_1}z$, where $z \in \Z(G)$, there exists an element $g_2 \in G$ such that $[g_1, g_2] = z_1$.
Thus $|\{g \in G \mid z_1 \in [g, G]\}| = p^4 - p^2$. Hence by Lemma \ref{4lemma1}, $\Pr_{z_1}(G) = \frac{1}{|G|b(G)}(p^4 - p^2) = \frac{p^2-1}{p^5}$, which is the required value for $\Pr_{z_1}(G)$.

Now we prove our claim. First suppose that $\alpha_1 \neq 0$. Assume contrarily that $\beta_1 \neq 0$. Then substituting the value of $\beta_2$ from \eqref{70eqn7} in \eqref{70eqn8} and \eqref{70eqn9}, and cancelling out $\gamma_2$ using the two new equations, we get $2\beta_1 = 0$. Since $\beta_1 \neq 0$ and $p$ is odd,  this is not possible. Now assume that $\alpha_1 = 0$ and $\beta_1 \neq 0$. It follows from \eqref{70eqn7} that $\alpha_2 = 0$. Substituting $\alpha_1 = \alpha_2 =0$, in \eqref{70eqn8}, we get $0 = 1$, which is absurd. Hence $\beta_1 = 0$, which proves our claim as well as the lemma. \hfill $\Box$

\end{proof}

The following result follows from the preceding two lemmas.
\begin{thm}
Let $G$ be the group as defined in \eqref{70eqn1}. Then $|P(G)| > 1$.
\end{thm}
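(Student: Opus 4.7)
The plan is to combine the two preceding lemmas directly. First I would observe that both $y = [x_1, x_2]$ and $z_1 = [x_1, y]$ are nontrivial commutators in $G$, so both belong to $K(G) \setminus \{1\}$. By the definition of $P(G) = \{\Pr_g(G) \mid 1 \neq g \in K(G)\}$ given in the introduction, it follows that $\Pr_y(G) \in P(G)$ and $\Pr_{z_1}(G) \in P(G)$.

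Next I would invoke the two preceding lemmas to read off the values $\Pr_y(G) = (p^2-1)/p^4$ and $\Pr_{z_1}(G) = (p^2-1)/p^5$. Since $p$ is an odd prime (in particular $p > 1$), these two rational numbers differ by a factor of $p$ and are therefore distinct. Hence $P(G)$ contains at least two different elements, so $|P(G)| \geq 2 > 1$, which is exactly the claim.

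There is essentially no obstacle at this stage: the genuine work has already been done in the two lemmas, where one has to solve the systems \eqref{70eqn4}--\eqref{70eqn6} and \eqref{70eqn7}--\eqref{70eqn9} over $\mathbb{F}_p$ and then apply Lemma \ref{4lemma1}. The present theorem is really just a one-line summary, included to record the promised fact that, in contrast with the class $2$ situation discussed earlier (where the question whether $|P(G)| = 1$ for all $p$-groups of class $2$ with exactly two conjugacy class sizes is posed), a $p$-group of class $3$ with only two conjugacy class sizes can satisfy $|P(G)| > 1$.
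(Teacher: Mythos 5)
Your proposal is correct and is exactly the paper's argument: the paper states this theorem as an immediate consequence of the two preceding lemmas, which give $\Pr_y(G) = (p^2-1)/p^4$ and $\Pr_{z_1}(G) = (p^2-1)/p^5$, two distinct elements of $P(G)$. Nothing further is needed.
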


Now we calculate $\Pr_g(G)$ for finite Camina $p$-groups of class $3$.

\begin{prop}\label{7prop1}
Let $G$ be a finite Camina $p$-group of class $3$. Then 
\[
\Pr_g(G) = \begin{cases} \frac{1}{|G|}(\frac{|G - \gamma_2(G)|}{|\gamma_2(G)|} + \frac{|\gamma_2(G) - \gamma_3(G)|}{|\gamma_3(G)|})  \text{ if }  g \in \gamma_3(G)\\
\Pr_g(G) = \frac{1}{|G|} \frac{|G - \gamma_2(G)|}{|\gamma_2(G)|}  \text{ if }  g \in \gamma_2(G) - \gamma_3(G).
\end{cases}
\]
Moreover, $P(G) = 2$.
\end{prop}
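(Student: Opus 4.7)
The plan is to apply the expression for $\Pr_g(G)$ from Lemma \ref{4lemma1}, slicing $G$ into the three strata $G - \gamma_2(G)$, $\gamma_2(G) - \gamma_3(G)$, and $\gamma_3(G)$, and counting contributions stratum by stratum. The first ingredient is a description of the conjugacy classes of $G$ tailored to each stratum. From the Camina hypothesis we immediately get $x^G = x\gamma_2(G)$, and hence $[x,G] = \gamma_2(G)$, for every $x \in G - \gamma_2(G)$. For the inner stratum we invoke Macdonald's results on Camina $p$-groups of class $3$ (cf. the references used for Theorem \ref{3thm3}): one knows that $\gamma_3(G) = \Z(G)$ and that for every $x \in \gamma_2(G) - \gamma_3(G)$ one has $x^G = x\gamma_3(G)$, so $[x,G] = \gamma_3(G)$. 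For $x \in \gamma_3(G) = \Z(G)$ we trivially have $[x,G] = \{1\}$.

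With these three facts, for a fixed $1 \neq g \in K(G) = \gamma_2(G)$, Lemma \ref{4lemma1} reduces to a sum depending only on which strata contain $x$'s with $g \in [x,G]$. If $g \in \gamma_3(G)$, then $g$ lies in both $\gamma_2(G)$ and $\gamma_3(G)$, so every $x \in G - \Z(G)$ contributes, giving
\[
\Pr_g(G) = \frac{1}{|G|}\left(\frac{|G - \gamma_2(G)|}{|\gamma_2(G)|} + \frac{|\gamma_2(G) - \gamma_3(G)|}{|\gamma_3(G)|}\right).
\]
If instead $g \in \gamma_2(G) - \gamma_3(G)$, then $g \notin \gamma_3(G) = [x,G]$ for $x$ in the middle stratum, and only the top stratum contributes, producing
\[
\Pr_g(G) = \frac{1}{|G|}\cdot \frac{|G - \gamma_2(G)|}{|\gamma_2(G)|}.
\]

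Finally, to conclude $|P(G)| = 2$ it is enough to check that the two displayed quantities are distinct. Their difference equals $\frac{1}{|G|}\cdot\frac{|\gamma_2(G) - \gamma_3(G)|}{|\gamma_3(G)|}$, which is strictly positive because $G$ has nilpotency class exactly $3$ forces $\gamma_3(G) \neq 1$ and $\gamma_3(G) \subsetneq \gamma_2(G)$ (otherwise the lower central series would stabilize above the trivial subgroup, contradicting nilpotency). Hence $K(G) = \gamma_2(G)$ partitions into two classes realizing exactly two values of $\Pr_g(G)$.

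The only real obstacle is citing, cleanly, the class-$3$ Camina fact that $x^G = x\gamma_3(G)$ for $x \in \gamma_2(G) - \gamma_3(G)$; everything else is bookkeeping with Lemma \ref{4lemma1}. This equality is essentially Macdonald's observation that $(G,\gamma_2(G))$ being a Camina pair forces $(\gamma_2(G),\gamma_3(G))$ to be a Camina pair as well in the class-$3$ case, and should be recalled explicitly at the start of the proof.
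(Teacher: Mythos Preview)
Your proposal is correct and follows essentially the same approach as the paper's proof: both stratify $G$ into $G-\gamma_2(G)$, $\gamma_2(G)-\gamma_3(G)$, and $\gamma_3(G)=\Z(G)$, identify $[x,G]$ and $|x^G|$ on each stratum via the Camina structure, and then apply Lemma~\ref{4lemma1}. Your version is in fact slightly more explicit than the paper's, since you cite Macdonald for the middle-stratum equality $x^G=x\gamma_3(G)$ and you actually verify that the two displayed values differ, whereas the paper simply asserts these points.
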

\begin{proof}
Notice that $K(G) = \gamma_2(G)$ for a finite Camina $p$-group $G$. Also if $G$ is a finite Camina $p$-group of class $3$, then  for $g \in \gamma_2(G) - \gamma_3(G)$, $\{x \in G \mid g \in [x, G]\} = G -\gamma_2(G)$, for $g \in \gamma_3(G)$, $\{x \in G \mid g \in [x, G]\} = G -\gamma_3(G)$ and
\[|x^G| = \begin{cases} |\gamma_3(G)| = |\Z(G)| \text{ if } x \in \gamma_2(G) -\gamma_3(G)\\
|\gamma_2(G)| \text{ if } x \in G -\gamma_2(G).
\end{cases}
\]

If $1 \ne g \in \gamma_3(G)$, then by Lemma \ref{4lemma1} and the above information we have  
\[{\Pr}_g(G) = \frac{1}{|G|}{\sum_{g \in [x, G]}}\frac{1}{|x^G|} = \frac{1}{|G|}(\frac{|G - \gamma_2(G)|}{|\gamma_2(G)|} + \frac{|\gamma_2(G) - \gamma_3(G)|}{|\gamma_3(G)|} ).\]
If  $g \in \gamma_2(G) - \gamma_3(G)$, then again by Lemma \ref{4lemma1} and the above information we have 
\[\Pr_g(G) = \frac{1}{|G|}{\sum_{g \in [x, G]}}\frac{1}{|x^G|} = \frac{1}{|G|} \frac{|G - \gamma_2(G)|}{|\gamma_2(G)|}.\]
Obviously   $P(G) = 2$, since ${\Pr}_g(G)$ takes only two values for all non-trivial $g \in \gamma_2(G)$. This completes the proof. \hfill $\Box$

\end{proof}

Now we discuss some bounds on ${\Pr}_g(G)$. As a simple consequence of Lemma \ref{4lemma1}, we get the following result, which also gives a relationship between ${\Pr}_g(G)$ and ${\Pr}(G)$.
\begin{lemma}
Let $G$ be a finite group.  Then for $1 \ne g \in K(G)$, we have
\begin{equation}\label{7eqn1}
 \frac{1}{|G|b(G)}|\{x \in G \mid g \in [x,  G]\}| \le {\Pr}_g(G) \le \frac{1}{|G|}{\sum_{x \in G-\Z(G)}}\frac{1}{|x^G|}.
\end{equation} 
Moreover,

(i) equality holds on the right side if and only if $g \in [x, G]$ for all $x \in G-\Z(G)$,

(ii) equality holds on the left side if and only if $|x^G| = b(G)$ for all $x \in G$ such that $g \in [x, G]$,

(iii) ${\Pr}_g(G) <  {\Pr}(G)$ for all $1 \ne g \in \gamma_2(G)$.
\end{lemma}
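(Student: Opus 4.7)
The plan is to deduce every inequality directly from the identity
\[
\Pr_g(G) = \frac{1}{|G|}\sum_{g\in [x,G]}\frac{1}{|x^G|}
\]
of Lemma \ref{4lemma1}, exploiting two elementary observations: first, since $g\neq 1$, no central $x$ contributes (because $[x,G]=\{1\}$ whenever $x\in\Z(G)$), so the summation index set is contained in $G-\Z(G)$; second, $|x^G|\le b(G)$ for every $x\in G$, so $1/|x^G|\ge 1/b(G)$.

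For the upper bound I would estimate the sum term by term from above: enlarge the index set from $\{x\in G\mid g\in[x,G]\}\subseteq G-\Z(G)$ to all of $G-\Z(G)$, which gives
\[
\Pr_g(G)\le \frac{1}{|G|}\sum_{x\in G-\Z(G)}\frac{1}{|x^G|}.
\]
Equality holds precisely when no term is dropped, i.e.\ exactly when $g\in[x,G]$ for every $x\in G-\Z(G)$; this yields claim (i). For the lower bound I would estimate term by term from below using $1/|x^G|\ge 1/b(G)$ on each of the $|\{x\in G\mid g\in[x,G]\}|$ surviving terms, obtaining
\[
\Pr_g(G)\ge \frac{1}{|G|\,b(G)}|\{x\in G\mid g\in[x,G]\}|,
\]
with equality if and only if every $x$ contributing to the sum satisfies $|x^G|=b(G)$; this gives (ii). Note that the set $\{x\in G\mid g\in[x,G]\}$ is nonempty because $g\in K(G)$ means $g=[x,y]$ for some $x,y$.

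For the strict inequality (iii), I would write out $\Pr(G)$ using Lemma \ref{4lemma1} with $g=1$; since $1\in[x,G]$ for every $x\in G$, this yields
\[
\Pr(G)=\frac{1}{|G|}\sum_{x\in G}\frac{1}{|x^G|}=\frac{|\Z(G)|}{|G|}+\frac{1}{|G|}\sum_{x\in G-\Z(G)}\frac{1}{|x^G|}.
\]
Combining with the already established upper bound on $\Pr_g(G)$ and using $|\Z(G)|\ge 1$ gives $\Pr_g(G)\le \Pr(G)-|\Z(G)|/|G|<\Pr(G)$.

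I do not anticipate any real obstacle: the entire argument is a careful bookkeeping of indices in the formula of Lemma \ref{4lemma1}, with the only subtle point being the verification that the hypothesis $g\neq 1$ forces the summation index to avoid $\Z(G)$, which is what makes the upper bound well-posed and the strict inequality in (iii) possible.
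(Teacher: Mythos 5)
Your proof is correct and takes essentially the same route as the paper's: all three parts are deduced directly from Lemma \ref{4lemma1} by enlarging the index set to $G-\Z(G)$ for the upper bound, bounding $1/|x^G|$ below by $1/b(G)$ termwise for the lower bound, and splitting off the central contribution $|\Z(G)|/|G|\ge 1/|G|$ from $\Pr(G)$ for the strict inequality in (iii). The equality criteria in (i) and (ii) are read off from these estimates exactly as in the paper.
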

\begin{proof}
Let $1 \ne g \in K(G)$. It easily follows from Lemma \ref{4lemma1} that
\[
 {\Pr}_g(G) = \frac{1}{|G|}{\sum_{g \in [x,  G]}}\frac{1}{|x^G|} \le \frac{1}{|G|}{\sum_{x \in G-\Z(G)}}\frac{1}{|x^G|}.
\]
By comparing summation terms, notice that equality holds on the right side of \eqref{7eqn1} if and only if $g \in [x, G]$ for all $x \in G-\Z(G)$. So (i) holds true. Again using Lemma \ref{4lemma1}, we have
\[
 {\Pr}_g(G) = \frac{1}{|G|}{\sum_{g \in [x,  G]}}\frac{1}{|x^G|} \ge \frac{1}{|G|}{\sum_{g \in [x,  G]}}\frac{1}{|b^G|} = \frac{1}{|G|b(G)}|\{x \in G \mid g \in [x,  G]\}|.
\]
It is again obvious to see that (ii) holds true.

Finally 
\[\frac{1}{|G|}{\sum_{x \in G-\Z(G)}}\frac{1}{|x^G|} < \frac{1}{|G|}\left({|\Z(G)| + \sum_{x \in G-\Z(G)}}\frac{1}{|x^G|} \right) = \frac{1}{|G|}{\sum_{x \in G}}\frac{1}{|x^G|} = {\Pr}(G).\]
Hence, for $1 \ne g \in \gamma_2(G)$, ${\Pr}_g(G) <  {\Pr}(G)$. \hfill $\Box$

\end{proof}

Notice that in the preceding lemma equality hold simultaneously on both sides of \eqref{7eqn1} if there exists an element $1 \ne g' \in K(G)$ such that $g' \in [x, G]$ for all $x \in G - \Z(G)$ and $|x^G| = b(G)$ for all such $x$. For such an element $g'$, ${\Pr}_{g'}(G) =  \frac{1}{|G|b(G)}|G - \Z(G)| = \frac{1}{|b(G)|}(1 - \frac{1}{|G:\Z(G)|})$. Let us set $B(G) = \frac{1}{|b(G)|}(1 - \frac{1}{|G:\Z(G)|})$.  Perhaps nothing special can be said about the relationship between $\Pr_{g}(G)$ and $B(G)$ for various  $1 \neq g \in K(G)$. This relationship highly depends on the given group $G$ as well as on the non-trivial element $g \in K(G)$.  For example, if we consider a finite Camina $p$-group of nilpotency class $3$, then it follows from Proposition \ref{7prop1} that $\Pr_g(G) > B(G)$ if $g \in \gamma_3(G)$ and $\Pr_g(G) < B(G)$ if $g \in \gamma_2(G) -\gamma_3(G)$.  
 It is not difficult to show that $\Pr_g(G) < B(G)$, for each $1 \ne g \in K(G)$, for the group $G$ defined in \eqref{70eqn1}.  
Also if  $\Pr_g(G) = B(G)$ for some group $G$ and some $1 \neq g \in K(G)$, we do not know what interesting can be said for the group $G$ itself, except the fact that such groups will have only two different conjugacy class sizes. But if,  for some group $G$, $\Pr_g(G) = B(G)$ for all $1 \neq g \in K(G)$, then we have the following nice characterization of such group. 

\begin{thm}
Let $G$ be a finite group. Then $\Pr_g(G) = B(G)$ for all $1 \neq g \in K(G)$ if and only if $G$ is isoclinic to a finite Camina special $p$-group for some prime integer $p$.
\end{thm}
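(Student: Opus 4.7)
The plan is to prove both implications after reducing to a finite $p$-group $H$ via isoclinism. By Theorem \ref{2thm1} together with the observations that $b(G) = b(H)$ and $|G:\Z(G)| = |H:\Z(H)|$ for isoclinic $G,H$ (using $C_G(x)/\Z(G) = C_{\bar G}(\bar x)$ and the isomorphism $\alpha:\bar G \to \bar H$ from any isoclinism $(\alpha,\beta)$), the numbers $\Pr_g(G)$ and $B(G)$ are invariant under isoclinism, and $\beta$ restricts to a bijection $K(G) \to K(H)$ fixing the identity. Hence the property ``$\Pr_g = B$ on $K \setminus \{1\}$'' is an invariant of the isoclinism class, and we are free to replace $G$ by a convenient representative.

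For the ``if'' direction, suppose $G$ is isoclinic to a Camina special $p$-group $H$. Then $\Z(H) = \gamma_2(H)$, $K(H) = \gamma_2(H)$, and $[x, H] = \gamma_2(H)$ for every $x \notin \Z(H)$, so $|x^H| = b(H) = |\gamma_2(H)|$ for every non-central $x$. For $1 \ne h \in K(H)$, $h \in [x, H]$ precisely when $x \notin \Z(H)$, and Lemma \ref{4lemma1} gives
\[
\Pr_h(H) = \frac{1}{|H|} \sum_{x \notin \Z(H)} \frac{1}{b(H)} = \frac{|H - \Z(H)|}{|H|\,b(H)} = B(H).
\]
By the isoclinism reduction this transports back to $\Pr_g(G) = B(G)$ for every $1 \ne g \in K(G)$.

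For the ``only if'' direction, assume $G$ is non-abelian (the abelian case is vacuous since $K(G) \setminus \{1\} = \emptyset$) and $\Pr_g(G) = B(G)$ for every $1 \ne g \in K(G)$. Writing $B(G) = \frac{|G - \Z(G)|}{|G|\,b(G)}$, the two-sided bound of the preceding lemma is squeezed to equality on both sides simultaneously, for every such $g$. The equality characterizations there then force: (a) $g \in [x, G]$ for every non-central $x$, hence $K(G) \subseteq [x, G]$, and combined with the trivial $[x, G] \subseteq K(G)$ this yields $[x, G] = K(G)$ for every $x \notin \Z(G)$; and (b) $|x^G| = b(G)$ for every non-central $x$, so $G$ has exactly two conjugacy class sizes. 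By Theorem \ref{3thm2} and Proposition \ref{3prop1} we may replace $G$ by an isoclinic $p$-group $H$ of nilpotency class at most $3$ with $\Z(H) \le \gamma_2(H)$ that inherits the same two properties.

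To finish, we rule out class $3$: if $H$ had class $3$, then $\Z(H) \subsetneq \gamma_2(H)$, so there exists non-central $y \in \gamma_2(H) \setminus \Z(H)$, giving $[y, H] = K(H)$ on the one hand and $[y, H] \subseteq \gamma_3(H)$ on the other; hence $\gamma_2(H) = \langle K(H) \rangle \subseteq \gamma_3(H)$, a contradiction. Thus $H$ has class $2$, and $\Z(H) \le \gamma_2(H) \le \Z(H)$ forces $\Z(H) = \gamma_2(H)$. In a class-$2$ group the map $g \mapsto [x, g]$ is a homomorphism into $\gamma_2(H)$, so $[x, H]$ is already a subgroup of $\gamma_2(H)$; hence $K(H) = [x, H]$ is a subgroup containing every commutator, forcing $K(H) = \gamma_2(H)$ and $[x, H] = \gamma_2(H)$ for every $x \notin \Z(H) = \gamma_2(H)$. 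This is the Camina condition on $H$, and Macdonald's Theorem \ref{3thm3} then gives that $H$ is special, as required. The main delicacy is to extract both equality conditions simultaneously for every $g \in K(G) \setminus \{1\}$, and then to eliminate class $3$ through the subgroup-generation identity $\gamma_2 = \langle K \rangle$ rather than through heavier structural machinery.
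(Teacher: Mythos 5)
Your route coincides with the paper's at every stage: reduce by isoclinism, extract the two equality conditions of the bounding lemma, eliminate class $3$ via $\gamma_2(H)=\langle K(H)\rangle\subseteq\gamma_3(H)$, and finish using the fact that $[x,H]$ is a subgroup in class $2$ together with Macdonald's theorem. The one step that does not hold up as written is the claim that $\Pr_g(G)=B(G)$ ``squeezes'' the two-sided bound of the lemma to equality on both sides. There is no squeeze: putting $S_g=\{x\in G\mid g\in[x,G]\}\subseteq G-\Z(G)$, the lower bound of the lemma is $|S_g|/(|G|b(G))\le |G-\Z(G)|/(|G|b(G))=B(G)$, while the upper bound is $\frac{1}{|G|}\sum_{x\in G-\Z(G)}|x^G|^{-1}\ge B(G)$. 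So $B(G)$ always sits \emph{between} the two bounds, and for a single $g$ the equation $\Pr_g(G)=B(G)$ is compatible with both inequalities being strict (a proper subset $S_g$ whose elements have classes smaller than $b(G)$ can produce exactly the value $B(G)$). Hence neither of your conclusions (a), (b) follows from the lemma for a fixed $g$; the hypothesis must be used for all $g$ simultaneously.

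The repair is a short global count. Summing $\Pr_g(G)=\frac{1}{|G|}\sum_{x\in S_g}|x^G|^{-1}$ over $1\ne g\in K(G)$ and interchanging the order of summation gives $\frac{1}{|G|}\sum_{x\in G-\Z(G)}\bigl(1-|x^G|^{-1}\bigr)$, since $|[x,G]|=|x^G|$. Equating this with $(|K(G)|-1)B(G)$ and using $\sum_{x\in G-\Z(G)}|x^G|^{-1}\ge |G-\Z(G)|/b(G)$ yields $|K(G)|\le b(G)$; since always $b(G)\le|K(G)|$ (because $[x,G]\subseteq K(G)$), equality holds throughout, which forces $|x^G|=b(G)$ for every non-central $x$. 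Feeding this back in, the right-hand bound of the lemma now equals $B(G)$, so $\Pr_g(G)=B(G)$ is equality there and gives $g\in[x,G]$ for every non-central $x$, i.e.\ $[x,G]=K(G)$. (The paper is equally terse at this point, asserting that $|x^G|=b(G)$ ``follows from the preceding lemma'', so the lacuna is inherited; but your explicit ``squeeze'' is the one assertion in your write-up that is actually false as a deduction rather than merely unproved.) Everything after this point, including the if-direction and the class-$3$ elimination, matches the paper and is fine.
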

\begin{proof}
Suppose that $\Pr_g(G) = B(G)$ for all $1 \neq g \in K(G)$. Then it follows from the preceding lemma that $|x^G| = b(G)$ for all $x \in G - \Z(G)$. Thus $G$ has only two different conjugacy class sizes. Now it follows from Theorem \ref{3thm2} that $G$ is isoclinic to a finite $p$-group $H$ (say) for some prime integer $p$, and the  nilpotency class of $H$ is either $2$ or $3$.  So let us work with $H$ now. Notice that $B(G) = B(H)$. Thus $\Pr_h(H) = B(H)$ for all $1 \neq h \in K(H)$ and therefore $|P(H)| = 1$. Let $1 \neq h \in K(H)$. Then $h \in [y, H]$ for all $y \in H - \Z(H)$, showing that $h \in \cap_{y \in H-\Z(H)}[y, H]$. Since this is true for each $1 \neq h \in K(H)$, we have $K(H) \in \cap_{y \in H-\Z(H)}[y, H]$. This is possible only when $[y, H] = K(H)$ for all $y \in H - \Z(H)$, since $[y, H] \subseteq K(H)$ for all $y \in H$.  We know that the nilpotency class of $H$ is either $2$ or $3$. We claim that $H$ is of class $2$. Assume that the nilpotency class of $H$ is $3$. Then there exists an element $u \in \gamma_2(G) - \Z(G)$. By what we have, it follows that $K(H) = [u, H]$. Thus $\gamma_2(H) = \gen{K(H)} \le \gamma_3(H)$, since $[u, H] \subseteq \gamma_3(H)$. This contradiction proves our claim that the nilpotency class of $H$ is $2$. Notice that in a finite $p$-group $X$ of class $2$, for any $x \in X$, $[x, X]$ is a subgroup of $\gamma_2(X)$. Since $K(H) = [y, H]$ for any element $y \in H - \\Z(H)$ and $[y, H]$ is a subgroup of $\gamma_2(H)$, it follows that  $\gamma_2(H) = \gen{K(H)} = [y, H] \le \gamma_2(H)$. Hence $[y, H] = \gamma_2(H)$ for all $y \in H - \Z(H)$.   It  now follows from Proposition \ref{3prop1}, the definition of Camina groups and Theorem \ref{3thm3} that $H$ is isoclinic to a Camina special $p$-group.

If part of the theorem follows from Theorem A. \hfill $\Box$

\end{proof}

We conclude this section with the following simple minded application of commuting probability. The following result follows from \cite[Proposition 1]{ND10}, which is proved using degree equation. Since the proof is as nice as eating grapes, we did not make efforts to re-produce a character free proof. 

\begin{prop}\label{5prop1}
Let $G$ be a non-abelian finite group and $p$ be the smallest prime dividing $|G|$. Then ${\Pr(G)} > \frac{1}{p}$ if and only if $G$ is isoclinic to an extraspecial finite $p$-group.
\end{prop}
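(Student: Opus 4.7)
The plan is to reduce the forward direction to a character-theoretic bound on $k(G)$, use it to pin down $|\gamma_2(G)|=p$, and then invoke the Corollary following Theorem \ref{4thm1} (the one characterising groups with $|\gamma_2(G)|=p$ and $(|G|,p-1)=1$ up to isoclinism) to conclude the result.

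For the backward direction, if $G$ is isoclinic to an extraspecial $p$-group $H$ of order $p^{2n+1}$, then Theorem \ref{2thm1} gives $\Pr(G)=\Pr(H)$, and the same Corollary after Theorem \ref{4thm1} (equivalently, Theorem A with $r=1$) evaluates $\Pr(H)=\frac{1}{p}\bigl(1+\frac{p-1}{p^{2n}}\bigr)>\frac{1}{p}$.

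For the forward direction, I would combine the identity $\Pr(G)=k(G)/|G|$, where $k(G)$ is the number of conjugacy classes (equivalently, the number of complex irreducible characters), with the degree equation $|G|=\sum_{\chi\in\Irr(G)}\chi(1)^{2}$. Let $r=|G:\gamma_2(G)|$ be the number of linear characters. For every non-linear $\chi\in\Irr(G)$ the degree $\chi(1)>1$ divides $|G|$, so every prime factor of $\chi(1)$ is at least $p$; in particular $\chi(1)\ge p$. Hence
\[
|G|\;\ge\;r+p^{2}\bigl(k(G)-r\bigr),
\]
and since $r/|G|=1/|\gamma_2(G)|$ this rearranges to
\[
\Pr(G)\;\le\;\frac{1}{p^{2}}+\frac{p^{2}-1}{p^{2}\,|\gamma_2(G)|}.
\]
The assumption $\Pr(G)>1/p$ then gives $(p^{2}-1)/|\gamma_2(G)|>p-1$, i.e.\ $|\gamma_2(G)|<p+1$, so $|\gamma_2(G)|\le p$. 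Because $G$ is non-abelian and every prime divisor of $|\gamma_2(G)|$ divides $|G|$ (hence is at least $p$), we must have $|\gamma_2(G)|=p$. As $p$ is the smallest prime dividing $|G|$, every prime divisor of $p-1$ is smaller than $p$, so $(|G|,p-1)=1$, and the Corollary following Theorem \ref{4thm1} applies to conclude that $G$ is isoclinic to an extraspecial $p$-group.

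The main obstacle is the character-theoretic input $\chi(1)\ge p$ for non-linear $\chi\in\Irr(G)$: this is precisely the "degree equation" step the authors refer to, and it is the one ingredient that seems hard to avoid by purely group-theoretic means. Once it is in hand, the rest is a short rearrangement and a direct appeal to the earlier corollary.
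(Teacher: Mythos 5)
Your proof is correct and follows essentially the route the paper itself indicates: the paper offers no proof of this proposition, instead citing \cite[Proposition 1]{ND10} and noting that it is ``proved using degree equation,'' which is precisely the argument you reconstruct ($\Pr(G)=k(G)/|G|$, the degree equation with $\chi(1)\ge p$ for non-linear irreducible characters, hence $|\gamma_2(G)|\le p$, and then the corollary following Theorem \ref{4thm1} together with $(|G|,p-1)=1$). All of your intermediate inequalities and the backward direction check out.
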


Notice that  any extraspecial $p$-group  has only two different class sizes, namely, $1$ and $p$. Thus it follows that if $G$ is any group which is isoclinic to an  extraspecial $p$-group, then  $G$ has only two different class sizes $1$ and $p$. The following result shows that the converse also holds true. 

\begin{thm}\label{5thm1}
Let $G$ be a finite group  which has only two conjugacy class sizes $1$ and $p$, where $p$ is any prime integer. Then $G$ is isoclinic to an extraspecial finite $p$-group.
\end{thm}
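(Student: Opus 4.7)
The plan is to reduce immediately to the $p$-group case and then bootstrap via the commuting-probability criterion of Proposition \ref{5prop1}. By Theorem \ref{3thm2}, the hypothesis that $G$ has class sizes $\{1,p\}$ forces $G$ to be isoclinic to some finite $p$-group $H$ of nilpotency class at most $3$; since isoclinism preserves the conjugacy class sizes outside the center (as can be seen from the defining commutative square \eqref{eqn1}, or directly from $|x^G|=|[x,G]|$ when $\gamma_2(G)$ is central, etc.), $H$ again has only two class sizes, $1$ and $p$, so in particular $b(H)=p$. Since isoclinism is an equivalence relation, it suffices to prove the statement for $H$ in place of $G$.

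Next, I would observe that $H$ is non-abelian, because the existence of a class of size $p>1$ forbids $H=\Z(H)$. Proposition \ref{4prop1}(ii) then gives immediately
\[
\Pr(H) \;>\; \frac{1}{b(H)} \;=\; \frac{1}{p}.
\]
Since $H$ is a $p$-group, $p$ is automatically the smallest prime dividing $|H|$, so the hypotheses of Proposition \ref{5prop1} are satisfied. Applying that proposition concludes that $H$ is isoclinic to an extraspecial $p$-group, and by transitivity of isoclinism the same holds for $G$.

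The argument is essentially a one-line deduction once Theorem \ref{3thm2} and Proposition \ref{5prop1} are in hand, so there is no serious obstacle; the only thing one must be careful about is checking that the class-size hypothesis passes to $H$ under isoclinism (so that $b(H)=p$). This follows because $|x^{H}|=[H:C_H(x)]=[\bar H:C_{\bar H}(\bar x)]$ depends only on $\bar H=H/\Z(H)$ together with its commutator map, both of which are preserved by an isoclinism $(\alpha,\beta)$. With that checkpoint in place, invoking Proposition \ref{4prop1}(ii) to get the strict inequality $\Pr(H)>1/p$ and then feeding it into Proposition \ref{5prop1} finishes the proof.
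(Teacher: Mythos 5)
Your proposal is correct and follows essentially the same route as the paper's own proof: reduce to an isoclinic $p$-group $H$ via Theorem \ref{3thm2}, observe $b(H)=p$, apply Proposition \ref{4prop1} to get $\Pr(H)>1/p$, and conclude with Proposition \ref{5prop1}. Your added checks (that class sizes transfer under isoclinism and that $p$ is automatically the smallest prime dividing $|H|$) are details the paper leaves implicit but do not change the argument.
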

\begin{proof}
Let $G$ be a finite group having only two conjugacy class sizes $1$ and $p$, where $p$ is any prime integer. Then by Theorem \ref{3thm2}, $G$ is isoclinic to a finite $p$-group $H$ (say), and therefore  $b(H) = b(G) = p$. Now it follows from Proposition \ref{4prop1} that ${\Pr}(G) =  {\Pr}(H) > 1/p$. Thus by Proposition \ref{5prop1}, $H$ is isoclinic to a finite  extraspecial $p$-group. This completes the proof of the theorem. \hfill $\Box$

\end{proof}

\begin{remark}
As mentioned in the introduction and as observed  by  Ishikawa \cite{kI99}, Theorem \ref{5thm1} can be easily obtained by using the following deep result of Vaughan-Lee \cite[Main Theorem]{vL76}, which states that for any finite $p$-group $G$, $|\gamma_2(G)| \le p^{\frac{b(G)(b(G)+1)}{2}}$.
\end{remark}

\noindent{\bf Acknowledgements.} The authors thank Pradeep K. Rai for pointing out a gap in the proof of Lemma \ref{6lemma1} in an old version of the paper.


\begin{thebibliography}{12}





\bibitem{DS96}
R. Dark and C. M. Scoppola, {\em  On Camina groups of prime power order},  J. Algebra {\bf 181} (1996), 787-802. 

\bibitem{DN10}
A. K. Das and R. K. Nath, {\em On generalized relative commutativity degree of a finite group}, Int. Electronic. J. Algebra {\bf 7} (2010), 140-151.






\bibitem{pH40}
P. Hall,   {\em The classification of prime power groups}, J. Reine Angew. Math. {\bf 182} (1940), 130-141.

\bibitem{pH12}
P. Hegarty, {\em Limit points in the range of the commuting probability function on finite groups}, Preprint (2012). Available at arxiv.org/pdf/1207.0760v1.pdf.


\bibitem{kI99}
K. Ishikawa,   {\em Finite  $p$-groups up to isoclinism, which have only two conjugacy class lengths}, J. Algebra {\bf 220} (1999), 333-345.

\bibitem{kI02}
K. Ishikawa,   {\em On finite  $p$-groups which have only two conjugacy lengths}, Israel J. Math. {\bf 129} (2002), 119-123.


\bibitem{nI53}
N. It${\hat {\rm o}}$, \emph{On finite groups with given conjugate types. I}, Nagoya Math. J. {\bf 6} (1953), 17 - 28.



\bibitem{pL95}
P. Lescot, {\em Isoclinism classes and commutativity degrees of finite groups}, J. Algebra {\bf 177}  (1995), 847-869.

\bibitem{vL76}
M. R. Vaughan-Lee, {\em Breadth and commutator subgroups of p-groups}, J. Algebra {\bf 32} (1976), 278-285. 

\bibitem{iM81}
I.~D.~Macdonald, \emph{Some $p$-groups of Frobenius and extra-special type}, Israel J.  Math.  \textbf{40} (1981), 350-364.

\bibitem{ND10}
R. K. Nath and A. K. Das, {\em On a lower bound of commutativity degree}, Rend. Circ. Mat. Palermo {\bf 59} (2010), 137-142.


\bibitem{PS08}
M. R. Pournaki and R. Sobhani, {\em Probability that the commutator of two group elements is equal to a given element}, J. Pure Appl. Algebra
{\bf 212} (2008), 727-734.


\bibitem{dR79}
D. J. Rusin, {\em What is the probability that two elements of a finite group commute?}, Pacific J. Math. {\bf 82} (1979), 237-247.

\bibitem{SS09}
G.~Shelly and A.~Shalev, {\em Commutator maps, measure preservation, and T-systems}, Trans. Amer. Math. Soc. {\bf 361} (2009), 4631-4651. 



\end{thebibliography}
\end{document}